\documentclass[11pt,reqno]{amsart}
\usepackage{amsmath, amsthm, amssymb, bbm, color}
\usepackage[hmargin={1.3in, 1.3in}, vmargin={1.3in, 1.3in}]{geometry}
\usepackage[breaklinks=true]{hyperref}
\allowdisplaybreaks

\theoremstyle{plain}
\newtheorem{theorem}{Theorem}[section]
\newtheorem{proposition}[theorem]{Proposition}

\newtheorem{lemma}[theorem]{Lemma}

\newtheorem{utheorem}{\textrm{\textbf{Theorem}}}

\theoremstyle{definition}
\newtheorem{definition}[theorem]{Definition}
\newtheorem{remark}[theorem]{Remark}
\newtheorem{example}[theorem]{Example}

\numberwithin{equation}{section}

\newcommand{\F}{\mathbb F}
\newcommand{\mc}{\mathcal}
\newcommand{\mbm}{\mathbbm}
\newcommand{\mfm}{\mathfrak{m}}

\renewcommand{\ge}{\geqslant}
\renewcommand{\leq}{\leqslant}

\begin{document}

\title[A tight bound for affine-linearity, via universal ballot
matrices]{A tight bound for affine-linearity, via\\ universal ballot
matrices}

\author{Apoorva Khare}
\address[A.~Khare]{Indian Institute of Science, Bangalore 560012, India}
\email{\tt khare@iisc.ac.in}

\author{Ashwin Sah}
\address[A.~Sah]{}
\email{\tt ashwinsahmath@gmail.com}

\subjclass[2020]{15A03 (primary), 13C10, 13E10, 05A99 (secondary)}

\keywords{Affine linear maps, multi-affine maps, concatenation, ballot
set, Artinian local rings, Noetherian rings, Kasch rings}

\date{\today}

\begin{abstract}
Based on work with Greenfeld and with Ziegler, Tao showed a concatenation
result that if a map $f : \mathbb{F}^2 \to \mathbb{F}$ is affine-linear
on every  line parallel to the coordinate axes, and on all lines with a
fixed nonzero slope (where the field $\mathbb{F}$ has size $> 2$), then
$f$ is affine-linear on $\mathbb{F}^2$. We extend this from
$\mathbb{F}^2$ to $\mathbb{F}^n$ and obtain a tight minimum number of
additional lines needed -- $N = \binom{n}{\lfloor n/2 \rfloor}$ -- for
every field $\mathbb{F}$ with $3 \leqslant n < |\mathbb{F}|$.

The proof is constructive and shows a stronger result: the existence of a
universal family of $0$-$1$ matrices of size $\binom{n}{k} \times
\binom{n}{k}$ (one for each pair $0 \leqslant k \leqslant n$), which are
indexed by ballot sets and are unimodular over all unital commutative
rings. We also show a second tightness: of the assumption $n <
|\mathbb{F}|$. Else, there exist multi-affine maps $f$ which are
affine-linear on every line through the origin, but not affine-linear
globally on $\mathbb{F}^n$.
More strongly, we prove this dichotomy -- including the bound of $N$ --
over all integral domains, or Noetherian (e.g.\ finite or Artinian)
rings, or products of these. This yields a novel numerical invariant for
affine-linearity, for every product of Noetherian rings and integral
domains.
\end{abstract}

\maketitle

\section{The main results}

Given an integer $k \ge 1$ and a set $S$, write $[k] := \{ 1, \dots, k
\}$ and let $\binom{S}{k} := \{ S' \subseteq S : |S'| = k \}$.

\subsection{Concatenation theorems}

The results below are in the theme of \textit{concatenation theorems}, as
discussed e.g.\ in~\cite{TZ} for polynomial maps on products of abelian
groups. Here we focus on multi-affine polynomials of several variables.

We begin in greater generality. Let $H \leq G$ and $K$ be abelian groups.
In \cite{TZ}, Tao and Ziegler define a map $P : G \to K$ to be a
\textit{polynomial} of degree $<d$ along $H$ -- for $d \leq 0$ if $P
\equiv 0$, and for $d>0$ if for every $h \in H$ there exists a polynomial
$P_h : G \to K$ of degree $< d-1$ along $H$, such that $P(g+h) = P(g) +
P_h(g)$ for all $g \in G$.

In particular, polynomials of degree $<1$ are constant maps along
$H$-cosets in $G$: $P(g+H) = P(g)$ for all $g \in G$. Now Tao--Ziegler
proved in general:

\begin{theorem}[{\cite[Proposition~1.2]{TZ}}]
Suppose $H_1, H_2 \leq G$ and $K$ are abelian groups, and $d_1, d_2 \in
\mathbb{Z}$. If $P : G \to K$ is a polynomial of degree $< d_i$ along
$H_i$ for $i=1,2$, then $P$ is a polynomial of degree $< d_1 + d_2 - 1$
along $H_1 + H_2$.
\end{theorem}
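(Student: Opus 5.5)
We argue by induction on $d_1 + d_2$, using the recursive definition of ``polynomial of degree $<d$ along $H$''. We first dispose of the degenerate cases. If $d_1 \leq 0$ or $d_2 \leq 0$, then $P \equiv 0$, which is a polynomial of every degree along every subgroup. The claim then holds, with one convention to settle: when $d_1 + d_2 - 1 \leq 0$ the required degree is again ``$P\equiv 0$'', which is satisfied. So we may assume $d_1, d_2 \geq 1$.

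The key base case is $d_1 = 1$. Here $P$ is constant along $H_1$-cosets, and we must show $P$ has degree $<d_2$ along $H_1+H_2$. We prove the stronger auxiliary statement, by induction on $d_2$: if $Q$ is $H_1$-invariant and has degree $<d_2$ along $H_2$, then $Q$ has degree $<d_2$ along $H_1+H_2$. Given $h = h_1 + h_2$, we have
\[
Q(g + h) - Q(g) = Q(g + h_2) - Q(g) = Q_{h_2}(g),
\]
using $H_1$-invariance of $Q$ at $g+h_2$. The main point is to choose $Q_{h_2}$ itself $H_1$-invariant, so that the induction applies to it. This is a real issue, since the derivative $P_h$ in the definition is not unique a priori. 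However, here $Q_{h_2}(g) = Q(g+h_2) - Q(g)$ is forced by the identity, so it is uniquely determined and automatically $H_1$-invariant. By induction, $Q_{h_2}$ has degree $<d_2 - 1$ along $H_1+H_2$, which closes the auxiliary claim. The case $d_2 = 1$ is symmetric.

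For the general step with $d_1, d_2 \geq 2$, take $h = h_1 + h_2$ and write
\[
P(g + h_1 + h_2) - P(g) = \bigl[P(g+h_1+h_2) - P(g+h_2)\bigr] + P_{h_2}(g) = P_{h_1}(g + h_2) + P_{h_2}(g),
\]
where $P_{h_i}(g) := P(g+h_i) - P(g)$ are the forced derivatives. We then check three facts.
\begin{enumerate}
\item $P_{h_2}$ has degree $<d_2-1$ along $H_2$ by definition. It also has degree $<d_1$ along $H_1$, because differences and translates of a polynomial along $H_1$ remain polynomials of the same degree bound along $H_1$. This preservation is a small lemma, proved by induction on degree using translation-invariance of the class. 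By induction, $P_{h_2}$ then has degree $<d_1+d_2-2$ along $H_1+H_2$.
\item Symmetrically, $P_{h_1}$ has degree $<d_1-1$ along $H_1$ and $<d_2$ along $H_2$, hence degree $<d_1+d_2-2$ along $H_1+H_2$.
\item Its translate $g\mapsto P_{h_1}(g+h_2)$ keeps this bound along $H_1+H_2$, again by the translation lemma.
\end{enumerate}
Finally, the sum of two polynomials of degree $<D$ along a subgroup is again one; this closure under addition is a routine induction. Hence $P_h$ has degree $<d_1+d_2-2$ along $H_1+H_2$, so $P$ has degree $<d_1+d_2-1$ along $H_1+H_2$.

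The main obstacle is bookkeeping rather than a single hard step. The closure lemmas (sums, translates, and the fact that the derivative is forced to be the difference) must be established first. The induction must be set up on $d_1+d_2$ jointly, so that both inductive calls in steps 1 and 2 are legitimate.
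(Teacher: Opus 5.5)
Your proof is correct. The paper gives no proof of this statement; it quotes it from Tao--Ziegler as background. Your argument is the standard one, and it is essentially how Tao--Ziegler prove it. It rests on three facts:
\begin{itemize}
\item induction on $d_1+d_2$;
\item the derivative $P_h = P(\cdot+h)-P$ is forced, and each class of polynomials of degree $<d$ along $H$ is an additive group closed under translations;
\item the identity $P(g+h_1+h_2)-P(g) = P_{h_1}(g+h_2)+P_{h_2}(g)$.
\end{itemize}

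You can drop the separate base case $d_1=1$, because the general step already works for all $d_1,d_2\ge 1$. When $d_1=1$, the inductive call for $P_{h_1}$ lands in the trivial case $d_1-1=0$, where $P_{h_1}\equiv 0$. The call for $P_{h_2}$ uses the pair $(1,d_2-1)$, which has smaller sum.
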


When $d_1 = d_2 = 2$, with additional data one can do better than $d_1 +
d_2 - 1 = 3$, as follows:
given abelian groups $H_1, H_2 \leq G$ and $K$, polynomials that are of
degree $<2$ along $H_1$ and $H_2$ satisfy
\[
P(g + n_1 h_1 + n_2 h_2) = P(g) + n_1 c_{h_1}(g) + n_2 c_{h_2}(g), \qquad
\forall h_1 \in H_1, h_2 \in H_2, \, n_1, n_2 \in \mathbb{Z}, g \in G
\]
for some maps $c_{h_i} : G/H_i \to K$, among other properties. These
resemble multi-affine maps, i.e.\ of degree $<2$ rather than the
Tao--Ziegler bound of $<3$. This was further explored by Greenfeld--Tao
in their study of the periodic tiling conjecture~\cite{GT}. In
particular, in his blog discussing~\cite{GT}, Tao states the following
result, which we have stripped of various technical but unnecessary
assumptions (such as the field being $\mathbb{R}$ or of odd prime order):

\begin{theorem}[\cite{T}]\label{Ttao}
Fix a field $\F$ with characteristic not $2$, and let $f : \F^2 \to \F$
be affine-linear when restricted to every horizontal, diagonal, and
anti-diagonal affine line (i.e.\ of slopes $0, 1, -1$ respectively). Then
$f$ is affine-linear on $\F^2$.
\end{theorem}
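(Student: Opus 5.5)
Change coordinates so that the two diagonal directions become coordinate axes. Set $u = x+y$ and $v = x-y$; this is invertible because $\operatorname{char}\F \neq 2$, with inverse $x = (u+v)/2$, $y = (u-v)/2$. Define $g(u,v) := f\bigl((u+v)/2, (u-v)/2\bigr)$.

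Lines of slope $-1$ are the lines $u = \text{const}$, and lines of slope $1$ are the lines $v = \text{const}$. So the hypothesis says that $g$ is affine-linear in $v$ for each fixed $u$, and affine-linear in $u$ for each fixed $v$. Hence $g$ is multi-affine:
\[
g(u,v) = a + bu + cv + d\,uv,
\]
where $a,b,c,d \in \F$ are determined by the values of $g$ at $(0,0)$, $(1,0)$, $(0,1)$, $(1,1)$. Concretely, $g(u,v) = g(u,0) + v\bigl(g(u,1)-g(u,0)\bigr)$, and each of $g(u,0)$ and $g(u,1)$ is affine in $u$. Substituting back gives
\[
f(x,y) = a + b(x+y) + c(x-y) + d(x^2 - y^2).
\]

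It remains to use the horizontal lines to show $d = 0$. On the line $y = y_0$, the restriction $x \mapsto f(x,y_0)$ is the polynomial function $d x^2 + (\text{affine in } x)$, and by hypothesis it is affine in $x$. Therefore $d x^2 = \alpha x + \beta$ as functions on $\F$, for some $\alpha, \beta \in \F$.

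This last step is the only delicate one, since polynomial identities need not hold coefficientwise as functions on a small field. Here $\operatorname{char}\F \neq 2$ forces $|\F| \ge 3$. A nonzero polynomial of degree at most $2$ has at most $2$ roots, so the polynomial $dX^2 - \alpha X - \beta$, which vanishes on all of $\F$, must be zero. In particular $d = 0$, and $f$ is affine-linear on $\F^2$.

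The characteristic assumption is used twice: to invert the change of variables, and to guarantee $|\F| > 2$. The second use is where the condition $n < |\F|$ from the abstract first appears, in the case $n = 2$.
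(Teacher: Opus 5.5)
Your proof is correct and follows essentially the route the paper indicates for this cited result: use $\operatorname{char}\F \neq 2$ to change basis so that the (anti-)diagonal directions become coordinate axes, obtain the multi-affine form (as in Proposition~\ref{Pmultiaffine}), and then kill the $uv$-coefficient using affine-linearity along the horizontal direction, which is $(1,1)$ in the new coordinates. Your root count for a polynomial of degree at most $2$ on a field with $|\F| \geq 3$ is exactly the $n=2$ case of the Vandermonde argument in Section~\ref{Smccoy}, namely the existence of $\mathbf{r}$ with $p_2(\mathbf{r}) \neq 0$.
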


As the characteristic is not $2$, one can replace the (anti-)diagonal
directions by any basis $e_1, e_2$ of $\F^2$ (avoiding the horizontal
direction). Thus, Theorem~\ref{Ttao} can be reformulated via this
change-of-basis, to say:

\textit{Suppose $f : \F^2 \to \F$ is affine-linear on every affine line
$m_0 + \F e_1$ and $m_0 + \F e_2$, for all $m_0 \in \F^2$. If $f$ is
moreover affine-linear on every affine line $m_0 + \F (v_1, v_2)$ for
some fixed $v_1, v_2 \in \F^\times$, then it is affine-linear on all of
$\F^2$.}

These hypotheses were significantly weakened by the first-named author
with Tikaradze~\cite{KT}, to show that one needs not all affine lines
$m_0 + \F (v_1, v_2)$, but just one line -- with $m_0 = (0,0)$. This was
further generalized to all dimensions, and beyond fields:

\begin{theorem}[{\cite[Theorem~1.3]{KT}}]\label{TKT1}
Suppose $R$ is a unital commutative ring, $n > 1$, and $e_1, \dots, e_n$
constitute the standard basis of $R^n$. Also suppose $1, 1+1, \dots, n
\in R$ are non-zerodivisors/regular elements.\footnote{We adopt the
convention that $0$ is also a zerodivisor.} Suppose $f : R^n \to R$ is
any map such that $f$ restricted to $m_0 + R e_i$ is affine-linear, for
all $m_0 \in R^n$ and $1 \leq i \leq n$.

Then there exist $2^n - (n+1)$ vectors ${\bf v}_i \in R^n$ such that if
$f$ restricted to each $R \cdot {\bf v}_i$ is affine-linear, then $f$ is
affine-linear on $R^n$ -- i.e., there exist $a_1, \dots, a_n \in R$ such
that
\[
f (c_1 e_1 + \cdots + c_n e_n) = f(0) + \sum_{i=1}^n a_i c_i, \qquad
\forall c_1, \dots, c_n \in R.
\]
\end{theorem}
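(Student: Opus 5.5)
First reduce to a multi-affine map. Affine-linearity of $f$ on every line $m_0 + R e_i$ means $f$ is affine in each coordinate separately when the others are fixed. Iterating over the coordinates then gives
\[
f(c_1 e_1 + \cdots + c_n e_n) = \sum_{S \subseteq [n]} a_S \prod_{i \in S} c_i,
\qquad a_S \in R,
\]
with $a_\emptyset = f(0)$. To see this, write $f$ as affine in $c_1$ with coefficients that are functions of $c_2,\dots,c_n$. Each coefficient is a difference or evaluation of $f$, so it is again affine in each remaining coordinate; induction on $n$ finishes. The goal is therefore to show $a_S = 0$ for all $|S| \ge 2$. There are exactly $2^n - (n+1)$ such $S$, which matches the number of vectors allowed.

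Next, test $f$ on a line through the origin. For ${\bf v} = (v_1,\dots,v_n)$ we have
\[
f(t{\bf v}) = \sum_S a_S\, {\bf v}^S\, t^{|S|}, \qquad {\bf v}^S := \prod_{i\in S} v_i.
\]
This is a polynomial in $t$ of degree at most $n$, and affine-linearity on $R{\bf v}$ says it equals $\alpha + \beta t$ for all $t\in R$. I plan to take only $0$-$1$ vectors ${\bf v} = {\bf 1}_T$ and evaluate at $t = 0, 1, \dots, n$. With $g(t) := f(t{\bf v}) - f(0) - t\,(f({\bf v})-f(0))$, the relations become
\[
\sum_{k=2}^n b_k (t^k - t) = 0 \quad (t = 2,\dots,n), \qquad b_k := \sum_{S \subseteq T,\ |S| = k} a_S .
\]
The coefficient matrix $(t^k - t)_{2\le t,k\le n}$ factors as $\prod t(t-1)$ times a Vandermonde-type matrix in $2,\dots,n$. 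Its determinant is therefore a product of integers whose prime factors are all at most $n$, so it is a non-zerodivisor by hypothesis. (A product of non-zerodivisors is a non-zerodivisor, and each relevant integer factors into primes $\le n$, each of which is among $1,\dots,n$.) Multiplying by the adjugate gives $b_k = 0$ for every $k \ge 2$, i.e.
\[
\sum_{S \subseteq T,\ |S|=k} a_S = 0 \quad\text{for every } T \subseteq [n] \text{ and every } k\ge 2 .
\]

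Now take all $T$ with $|T| \ge 2$; there are exactly $2^n-(n+1)$ of them, and they will be our vectors ${\bf v}_i = {\bf 1}_T$. Run upward induction on $|S|$. If $|T| = 2$, the relation with $k=2$ gives $a_T = 0$. In general, for $|T| = m$ and $k = m$, the only $S \subseteq T$ with $|S| = m$ is $S = T$, so $a_T = 0$ directly. Hence all $a_S$ with $|S|\ge 2$ vanish, and $f(\sum c_i e_i) = f(0) + \sum_i a_{\{i\}} c_i$, as claimed.

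The main obstacle is the middle step: making the elimination over $t=0,\dots,n$ valid in a general ring. This requires checking that the relevant determinant, essentially $\prod_{2\le t\le n} t(t-1)\cdot\prod_{2\le s<t\le n}(t-s)$, is a non-zerodivisor using only that $1,\dots,n$ are. I would handle this by noting that every factor lies in $\{1,\dots,n\}$, so no prime factorization is needed at all. The adjugate argument then needs only that $\det$ is regular, and never needs $\det$ to be invertible.
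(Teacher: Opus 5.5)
Your proof is correct. The paper does not prove this statement itself; it quotes it from \cite{KT}. For $n \ge 3$ it recovers the statement as a corollary of Theorem~\ref{T1}, in three steps:
\begin{enumerate}
\item the same multi-affine reduction (Proposition~\ref{Pmultiaffine});
\item the same Vandermonde elimination, noting that $p_n(1,2,\dots,n)$ is a non-zerodivisor;
\item only the $N=\binom{n}{\lfloor n/2\rfloor}\le 2^n-(n+1)$ ballot directions, whose coefficient matrices $A_{n,k}$ are shown to be unimodular by the ballot-matrix machinery of Section~\ref{Sballot}.
\end{enumerate}

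Your Vandermonde step is the paper's with nodes $(1,\dots,n)$, where the $t=1$ equation is used to eliminate the linear unknown $c({\bf v},1)-m_{\bf v}$. Indeed, your determinant $\prod_{t=2}^n t(t-1)\prod_{2\le s<t\le n}(t-s)$ equals $\pm p_n(1,\dots,n)$. The paper keeps arbitrary nodes ${\bf r}$ instead, and that is what gives it the weaker hypothesis ${\rm Ann}_R(I_{p_n})=0$.

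The genuine difference is the choice of directions. You take every ${\bf 1}_T$ with $|T|\ge 2$, so the top-degree relation $b_{|T|}=a_T$ isolates each coefficient. The final linear system is therefore trivially triangular, and no unimodularity argument is needed. Your route also handles $n=2$ uniformly, which Theorem~\ref{T1} does not cover: there $N=2>1=2^2-3$, and the paper simply cites \cite{KT}. The cost is that you use $2^n-(n+1)$ directions rather than the optimal $N$, and getting down to $N$ is exactly what the ballot construction is for.

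A minor point: the ``upward induction on $|S|$'' you announce is never used, since the relation with $k=|T|$ gives $a_T=0$ outright.
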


For historical completeness, we add that the quest to prove that
``restricted affine-linearity'' implies global affine-linearity has been
a topic studied for over a century. See e.g.\ von Staudt's
work~\cite{vS}, Hartshorne's course-notes~\cite{H} which he based on
Zariski's lectures, and other references in~\cite{KT}.

\subsection{A tight bound: I.~Sufficiency}

Before proceeding, here is a simplifying observation:

\begin{proposition}\label{Pmultiaffine}
Suppose $R$ is any unital commutative ring, $n \ge 1$, and $f : R^n \to
R$ any map. Then $f$ is affine-linear on $m_0 + R \cdot e_i$ for all $m_0
\in R^n$ and $1 \leq i \leq n$, if and only if $f$ is a multi-affine
polynomial on $R^n$:
\begin{equation}\label{EaJ}
f({\bf x}) = \sum_{J \subseteq [n]} a_J {\bf x}^J \qquad \text{on } R^n,
\end{equation}
for some scalars $a_J \in R$, and where ${\bf x}^J := \prod_{j \in J}
x_j$. 
\end{proposition}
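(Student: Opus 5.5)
We prove the two implications separately; the ``if'' direction is a direct check. If $f$ has the form \eqref{EaJ}, fix $m_0 \in R^n$ and $i \in [n]$. Every monomial ${\bf x}^J$ has degree at most one in $x_i$. Hence
\[
f(m_0 + t e_i) = \sum_{J \not\ni i} a_J m_0^J + t \sum_{J \ni i} a_J m_0^{J \setminus \{i\}} \qquad (t \in R).
\]
Here the monomials with $J \not\ni i$ are evaluated at $m_0$. Those with $J \ni i$ contribute $(m_{0,i}+t)\, m_0^{J\setminus\{i\}}$, and the $t$-free parts of these terms are absorbed into the constant. This has the form $\alpha + \beta t$, so $f$ is affine-linear on the line $m_0 + R e_i$.

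For the ``only if'' direction we induct on $n$. We interpret affine-linear on $m_0 + Re_i$ as meaning that $t \mapsto f(m_0 + te_i)$ equals $\alpha + \beta t$ for some $\alpha, \beta \in R$ depending on $m_0$ and $i$.

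For $n=1$ this is the statement itself: $f(x) = a_\emptyset + a_{\{1\}} x$.

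For the inductive step, write ${\bf x} = ({\bf x}', x_n)$ with ${\bf x}' \in R^{n-1}$. Affine-linearity along $e_n$ through the point $({\bf x}', 0)$ gives
\[
f({\bf x}', x_n) = g({\bf x}') + x_n h({\bf x}').
\]
Here $g({\bf x}') := f({\bf x}',0)$ and $h({\bf x}') := f({\bf x}',1) - f({\bf x}',0)$; the latter identification comes from evaluating at $x_n = 0, 1$.

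Both $g$ and $h$ are maps $R^{n-1} \to R$, and they inherit affine-linearity along each $e_i$ with $i<n$. Indeed, $g$ is the restriction of $f$ to the slice $x_n = 0$, and $h$ is the difference of two such restrictions (at $x_n = 1$ and $x_n = 0$). A difference of two affine-linear functions of $t$ is again affine-linear in $t$. By the induction hypothesis, $g$ and $h$ are multi-affine polynomials in $x_1, \dots, x_{n-1}$. Substituting into $f = g + x_n h$ gives the form \eqref{EaJ}, where the coefficients $a_J$ for $J \ni n$ come from $h$ and those for $J \not\ni n$ come from $g$.

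No step is a real obstacle. The only point needing care is the meaning of ``affine-linear'' over a general ring $R$: it must be the existence of $\alpha, \beta$ with $f(m_0 + te_i) = \alpha + \beta t$ for all $t$, not some weaker condition. With that definition the coefficients are pinned down by evaluating at $t = 0,1$, so no invertibility or non-zerodivisor hypotheses on $R$ are needed. Uniqueness of the $a_J$ is not claimed, so we need not address it; it is also irrelevant for small rings such as $\F_2$, where distinct polynomials can define the same function.
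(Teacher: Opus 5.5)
Your proof is correct. The ``if'' direction is the same direct computation as in the paper. One slip: your display drops the $t$-free parts $a_J m_0^J$ (for $J\ni i$) from the constant term, which should be $\sum_J a_J m_0^J = f(m_0)$. Your prose says this, and the affine form in $t$ is unaffected.

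For the ``only if'' direction the paper gives no argument and simply cites [KT, pp.~765--766]. You instead supply a self-contained induction on $n$, writing $f({\bf x}',x_n) = g({\bf x}') + x_n h({\bf x}')$ with $g = f(\cdot,0)$ and $h = f(\cdot,1)-f(\cdot,0)$. Your check that $g$ and $h$ inherit affine-linearity along $e_1,\dots,e_{n-1}$ is correct, and so is the reassembly. The citation keeps the paper short. Your route makes the proposition self-contained over arbitrary $R$, and shows why no non-zerodivisor hypotheses are needed: only the evaluations $t\in\{0,1\}$ are ever used. Unwinding your induction also gives the explicit formula $a_J = \sum_{I\subseteq J}(-1)^{|J\setminus I|} f(e_I)$, where $e_I := \sum_{i\in I} e_i$.

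One side remark of yours is mistaken, though it does not affect the proof. The coefficients $a_J$ of a multi-affine polynomial are uniquely determined by the function over every ring, $\F_2$ included. The familiar non-uniqueness over $\F_2$ (e.g.\ $x^2$ versus $x$) involves higher powers of a single variable. If $\sum_J a_J {\bf x}^J \equiv 0$ on $R^n$, evaluating at the points $e_I$ gives $\sum_{J\subseteq I} a_J = 0$ for all $I\subseteq[n]$. This is a unitriangular system, so every $a_J = 0$; your own slicing at $x_n = 0,1$ proves the same by induction. The paper relies on this uniqueness later. For example, in the proof of Theorem~\ref{T2}(b), $b\,x_1\cdots x_q$ is declared not affine-linear because of its top monomial, where $q$ may be $2$, i.e.\ $R_{i_0}/P \cong \F_2$.
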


\begin{remark}
As we are not sure Proposition~\ref{Pmultiaffine} was proved over general
rings, a short proof is included below for completeness. 
Here we add that this result (even for the special case of finite fields)
connects our paper to important results in computational complexity. The
result ``MIP = NEXP'' was shown by
Babai--Fortnow--Lund~\cite{BFL}, and deals with functions that agree on a
proportion of the domain with a multi-affine map (which is termed
``multilinear'' in the complexity literature, i.e., linear in each
variable upon fixing values for all others). Trying to quantify this
involves exploring when a function $f$ satisfying some form of an
affine-linearity functional equation for a large fraction of inputs in
$\F_q^n \times \F_q^n$, implies that $f$ matches a single multi-affine
map on (another) large fraction of inputs in $\F_q^n$. The PCP theorem is
a scaling down of the ``MIP = NEXP'' result to NP and is obtained by an
extension of the Babai--Fortnow--Lund theorem from multi-affine functions
to low-degree polynomials.
\end{remark}

Given Proposition~\ref{Pmultiaffine}, we will replace the above
assumptions in Tao's result (Theorem~\ref{Ttao}) -- when used below --
equivalently by ``multi-affine'' for brevity. Now it is natural to ask if
the bound of $2^n - (n+1)$ in Theorem~\ref{TKT1} can be reduced for $n
\ge 3$ (for $n \leq 2$ it is $\leq 1$). This was indeed carried out
in~\cite{KT} in various restricted settings:

\begin{theorem}[{\cite[Theorem~3.6]{KT}}]\label{TKT}
Fix an integer $n \ge 3$. Suppose $R$ is any of:
(a)~a finite integral domain (i.e., finite field) of size at least
$2^{n-1}+1$,
(b)~an infinite integral domain, or
(c)~a unital commutative ring in which $2 = 1+1, 3 = 1+1+1, \dots, p(n)$
(the $n$th prime integer) are non-zerodivisors. Then there exist
\[
N := \binom{n}{\lfloor n/2 \rfloor}
\]
directions ${\bf v}_1, \dots, {\bf v}_N \in R^n$ such that if $f : R^n
\to R$ is multi-affine and $f|_{R {\bf v}_i}$ is affine-linear for all
$i$, then $f$ is affine-linear on $R^n$. Fewer than $N$ directions will
not suffice.
\end{theorem}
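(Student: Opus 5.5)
Write $f(\mathbf x)=\sum_J a_J\mathbf x^J$ as in Proposition~\ref{Pmultiaffine}. Affine-linearity of $f$ on $R^n$ means $a_J=0$ for all $|J|\ge 2$. Restricting to a line $R\mathbf v$ gives
\[
f(t\mathbf v)=\sum_{k=0}^n t^k\, L_k(\mathbf v),\qquad L_k(\mathbf v):=\sum_{|J|=k} a_J \mathbf v^J .
\]
The plan has two halves: turn affine-linearity along $R\mathbf v$ into the vanishing of each $L_k(\mathbf v)$ for $k\ge 2$, and then choose the $\mathbf v_i$ so that these vanishings force every $a_J=0$.

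\textbf{Step 1: separating degrees.} Affine-linearity along $R\mathbf v$ says the polynomial function $t\mapsto\sum_{k\ge2}t^kL_k(\mathbf v)$ vanishes on $R$. In an infinite domain, or a finite field with $|R|>n$, this forces each $L_k(\mathbf v)=0$. In case (c) I would instead evaluate at $t=0,1,\dots,n$. The resulting Vandermonde system has determinant $\prod_{i<j\le n}(j-i)$, whose prime factors are all at most $n<p(n)$. These primes are regular elements, so the determinant is regular, and multiplying by the adjugate again gives $L_k(\mathbf v)=0$. After this step the conditions decouple by degree $k\in\{2,\dots,n\}$.

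\textbf{Step 2: choosing the directions.} For each $k$ we need vectors $\mathbf v_1,\dots,\mathbf v_N$ such that the $N\times\binom nk$ matrix $(\mathbf v_i^J)_{i,\,|J|=k}$ has a regular (injective) column action. Since $\binom nk\le N$, it suffices to find a $\binom nk\times\binom nk$ minor whose determinant is regular, or nonzero in the domain cases. I would take generic vectors:
\begin{itemize}
\item In the infinite domain case, treat the coordinates $v_{ij}$ as indeterminates. The maximal minors of $(\mathbf v_i^J)$ are nonzero polynomials, because distinct monomials $\mathbf v^J$ are linearly independent functions. Their product is a nonzero polynomial of bounded degree, so it has a nonvanishing point.
\item For a finite field with $|R|\ge 2^{n-1}+1$, the same product has degree at most roughly $2^{n-1}$ in each variable, so Schwartz--Zippel, or the combinatorial Nullstellensatz, gives a nonvanishing point. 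Checking this degree count is exactly where the hypothesis $|R|\ge 2^{n-1}+1$ should enter.
\item For case (c), genericity is unavailable. I would use explicit directions, e.g.\ $\mathbf v_i=(p_1^{\epsilon_{i1}},\dots)$ built from powers of small primes, chosen so that the relevant determinants are products of differences of distinct integer monomials in primes up to $p(n)$. Controlling these determinants is the main obstacle. My first attempt would use vectors $\mathbf v_S=\sum_{j\in S}e_j$ indexed by $S\in\binom{[n]}{\lfloor n/2\rfloor}$, scaled by distinct primes, and relate the resulting $0$-$1$ inclusion matrices to ballot-type orderings so as to get a triangular determinant.
\end{itemize}

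\textbf{Step 3: lower bound.} Take $k=\lfloor n/2\rfloor$. For fewer than $N$ vectors, the linear map $(a_J)_{|J|=k}\mapsto (L_k(\mathbf v_i))_i$ goes from $R^N$ to $R^{M}$ with $M<N$, so it has a nonzero kernel element. For a domain this follows by passing to the fraction field. For general $R$, McCoy's theorem gives a nonzero kernel element whenever there are fewer equations than unknowns. The corresponding $f=\sum_{|J|=k}a_J\mathbf x^J$ is then homogeneous of degree $k\ge 2$, vanishes on every line $R\mathbf v_i$, and is not affine-linear.
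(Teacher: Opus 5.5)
\paragraph{The gap: case (c).} You leave case (c) open, and it does not follow from what you wrote. Over a ring with zerodivisors, genericity gives nothing. You must exhibit directions for which, for every $2\le k\le n$, the system $(\mathbf{v}_i^J)_{i,\,|J|=k}$ is injective on $R^{\binom nk}$, for example because some maximal minor is a non-zerodivisor of $R$. With integer directions this requires arithmetic control of the minors (all prime factors at most $p(n)$, ideally $\pm1$), and your sketch supplies none.

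Your concrete attempt fails outright. A direction supported on a $\lfloor n/2\rfloor$-set, scaled or not, has $\mathbf{v}^J=0$ for every $|J|>\lfloor n/2\rfloor$. For instance $f=x_1x_2\cdots x_n$ vanishes on all these lines but is not affine-linear.

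The missing idea is the paper's choice of supports. Take the complements $[n]\setminus B$ of the ballot sets $B$ (support sizes $n,n-1,\dots,\lceil n/2\rceil$), ordered by $|B|$. Then prove that for each $k$ the first $\binom nk$ of them give an inclusion matrix $A_{n,k}$ of determinant $\pm1$. The proof has four ingredients:
\begin{enumerate}
\item $A_{n,k}\simeq D_{n,k}$ or $C_{n,n-k}$.
\item The M\"obius-inversion factorization $D_{n,k}=T_{n,k}C_{n,k}$, where $T_{n,k}(I,I')=(-1)^{|I'|}\mathbf{1}[I'\subseteq I]$ is triangular with $\pm1$ diagonal because ballot sets are closed under taking subsets.
\item The block-triangular splitting of $C_{n,k}$ into $C_{n-1,k-1}$ and $C_{n-1,k}$, according to whether $n$ lies in the sets.
\item The identification $C_{2k-1,k}\simeq D_{2k-1,k-1}$, which handles the edge case $n=2k$.
\end{enumerate}
Being unimodular over $\mathbb{Z}$, these matrices are invertible over every ring. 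Combined with your Step~1, which only needs $1,\dots,n$ regular, they settle (c), and (a), (b) as well, uniformly.

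\paragraph{What is fine, with corrections.} Your genericity argument for (a) and (b) is a valid alternative to explicit directions, but you need to actually do the degree count. In the minor $\det(\mathbf{v}_i^J)_{i\le\binom nk,\,|J|=k}$ the Leibniz terms are distinct monomials with coefficients $\pm1$, since the monomial determines the bijection. Each variable $v_{ij}$ occurs to degree at most $1$, because it lives only in row $i$ and $\mathbf{v}^J$ is multilinear. So the product over $k=2,\dots,n$ is a nonzero polynomial of degree at most $n-1$ in each variable. The individual-degree form of the Combinatorial Nullstellensatz then gives good directions over any field with $|F|\ge n$. Total-degree Schwartz--Zippel would be far too weak, since the total degree is $\sum_k k\binom nk$. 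So the hypothesis $|F|\ge 2^{n-1}+1$ plays no role; with Step~1 your argument covers every field with $|F|>n$.

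Two small corrections:
\begin{itemize}
\item In Step~1, what vanishes identically is $\sum_{k\ge2}(t^k-t)L_k(\mathbf{v})$, since the slope is $f(\mathbf{v})-f(0)$. This does not affect the argument.
\item In Step~3, use $k=\lceil n/2\rceil$. For $n=3$ one has $\lfloor n/2\rfloor=1$, and a nonzero linear $f$ is no counterexample, whereas $\binom{n}{\lceil n/2\rceil}=N$ and $\lceil n/2\rceil\ge 2$. The paper's own lower-bound paragraph has the same slip.
\end{itemize}
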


Note by Stirling's formula that $N \sim 2^{n + \frac{1}{2}} / \sqrt{\pi
n}$, so $N = o(2^n - (n+1))$. Theorem~\ref{TKT} was proved using
multiplicative analogues of Sidon-sets, aka $B_h$-sets. That is, the
proof of Theorem~\ref{TKT} uses the existence of an $n$-element subset $S
\subset R$ satisfying:
\begin{equation}\label{Esidon}
for\ each\ fixed\ 2 \leq h \leq n,\ the\ subsets\ in\ \binom{S}{h} \
have\ pairwise\ distinct\ products.
\end{equation}

In a sense, this means the ground ring is ``large enough'' that such sets
exist in $R$, whence there exist enough ``generic'' directions ${\bf
v}_i$ that help show global affine-linearity.\medskip

Below, our objective is to extend Theorem~\ref{TKT} to every field -- and
in parallel, we also extend it to every finite unital commutative ring.
Note that if $|\F| < N$, then a subset $S$ satisfying~\eqref{Esidon}
necessarily does not exist for $h = \lfloor n/2 \rfloor$. Thus, the
strategy changes: to find not generic but \textit{specific} $N$-many
directions ${\bf v}_1, \dots, {\bf v}_N$ that will make Theorem~\ref{TKT}
work still. Even more challenging (if true, which is not clear e.g.\
from~\cite{KT}) is the existence of such directions over arbitrary rings
$R$ -- individually, or stronger still, all $R$ at once.

In this work, we show that this strongest scenario indeed holds for all
fields (and part of it holds for all rings). We term our universal and
minimal set of $N$ vectors ${\bf v}_i \in \{ 0, 1 \}^n$ as the
\textbf{ballot directions}; these are alluded to in Theorem~\ref{T2}(3)
and explicitly given in Theorem~\ref{thm:main}.

We now present our main results. These require the following notions:

\begin{definition}
Throughout this paper, $R$ denotes a unital commutative ring. Given an
integer $n \ge 2$, define the \textit{$n$th Vandermonde polynomial} to be
\begin{equation}\label{Epn}
p_n({\bf x}) \ := \ \prod_{j=1}^n x_j \cdot \prod_{1 \leq j < k \leq n}
(x_j - x_k) \quad \in R[x_1, \dots, x_n],
\end{equation}
and the associated \textit{$n$th Vandermonde ideal} $I_{p_n}$ to be
generated by the image of $p_n$:
\begin{equation}
I_{p_n} = I_{p_n}(R) := \langle {\rm im}(p_n) \rangle =
\left\langle \{ p_n({\bf r}) : {\bf r} \in R^n \} \right\rangle.
\end{equation}
Finally, we say (a unital commutative ring) $R$ is
\textit{$n$-Vandermonde full} if ${\rm Ann}_R (I_{p_n}) = 0$.
\end{definition}

For instance, ${\rm Ann}_R(I_{p_n}) = 0$ if (but not only if) $p_n({\bf
r})$ is a non-zerodivisor for some tuple ${\bf r} \in R^n$. Also, we
attach the name ``Vandermonde'' to the polynomial $p_n$ (and hence to the
ideal $I_{p_n}$) since $p_n({\bf r})$ is the determinant of the matrix
\[
\begin{pmatrix} 1 & 0 & 0 & \cdots & 0 \\
1 & r_1 & r_1^2 & \cdots & r_1^n \\
1 & r_2 & r_2^2 & \cdots & r_2^n \\
\vdots & \vdots & \vdots & \ddots & \vdots\\
1 & r_n & r_n^2 & \cdots & r_n^n \\
\end{pmatrix} \in R^{(n+1) \times (n+1)}.
\]

Our first main result presents a sufficient condition (a)~under which a
multi-affine map $: R^n \to R$ must be affine-linear on $R^n$ if it is
affine along all (ballot) directions; and
(b)~which is broad enough to subsume the above theorems.

\begin{utheorem}\label{T1}
Suppose $n \ge 3$. If a ring $R$ is $n$-Vandermonde full, then
Theorem~\ref{TKT} holds over $R$. In particular, the bound of $N$
directions is sharp.
\end{utheorem}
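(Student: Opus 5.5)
The plan is to translate affine-linearity along a line through the origin into linear equations on the coefficients $a_J$ of \eqref{EaJ}. The Vandermonde hypothesis is used to separate these equations by degree, and a combinatorial choice of $0$-$1$ directions then makes each degree's system uniquely solvable. By Proposition~\ref{Pmultiaffine}, write $f(\mathbf x)=\sum_J a_J\mathbf x^J$. For a direction $\mathbf v$ and $t\in R$,
\[ f(t\mathbf v)=\sum_{k=0}^n b_k(\mathbf v)\,t^k, \qquad b_k(\mathbf v):=\sum_{|J|=k}a_J\mathbf v^J. \]
So $f|_{R\mathbf v}$ is affine-linear if and only if $g(t):=\sum_{k=2}^n b_k(\mathbf v)t^k$ vanishes for every $t\in R$. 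Global affine-linearity means $a_J=0$ for all $|J|\ge 2$.

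\emph{Step 1 (degree separation).} Suppose $g\equiv 0$ on $R$. Set $b_1:=0$ and evaluate $g$ at $r_1,\dots,r_n$, for an arbitrary $\mathbf r\in R^n$. This gives $A\mathbf b=0$, where $A=(r_i^k)_{1\le i,k\le n}$ and $\mathbf b=(b_1,\dots,b_n)^T$. Multiplying by the adjugate of $A$ gives $\det(A)\,b_k=0$ for every $k$. Now $\det A=\prod_i r_i\prod_{i<j}(r_j-r_i)=\pm p_n(\mathbf r)$. Hence each $b_k$ is annihilated by every element of ${\rm im}(p_n)$, and therefore by $I_{p_n}$. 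By $n$-Vandermonde fullness, $b_k(\mathbf v)=0$ for all $2\le k\le n$. When $\mathbf v=\mathbbm 1_S$ is the indicator vector of a set $S\subseteq[n]$, this reads
\[ \sum_{J\subseteq S,\ |J|=k}a_J=0 \qquad \text{for every } 2\le k\le n. \]

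\emph{Step 2 (ballot directions and unimodularity; the main obstacle).} We need $N$ sets $S_1,\dots,S_N$ such that, for each $k$, the incidence matrix $M_k=(\mathbbm 1[J\subseteq S_i])$, with rows $i$ and columns $J\in\binom{[n]}{k}$, has a $\binom nk\times\binom nk$ submatrix of determinant $\pm1$ over $\mathbb Z$. Such a submatrix is then invertible over every $R$, which forces $a_J=0$ for $|J|=k$. I would take a symmetric chain decomposition of $2^{[n]}$ via the Greene--Kleitman bracketing, read on $0$-$1$ words as unmatched parentheses; its chains are indexed by ballot sequences. There are exactly $N$ chains, and I let $S_C$ be the top element of chain $C$. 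Every $k$-set $J$ lies in a unique chain $C(J)$, so $J\mapsto C(J)$ is injective on $\binom{[n]}{k}$, and $J\subseteq S_{C(J)}$; this selects the rows. The crux is to order the $k$-sets (for instance by the positions of their matched brackets, or lexicographically on the matched part) so that $J'\subseteq S_{C(J)}$ with $J'\neq J$ forces $J'$ to come earlier. Then the submatrix is unitriangular. Here $S_{C(J)}$ is obtained from $J$ by switching its unmatched $0$'s to $1$'s, while the matched pairs of $J$ are rigid. This is the delicate combinatorial verification I expect to take the most work. If the order fails in the naive form, I would instead prove the determinant is $\pm1$ by induction on $n$, using the recursive (Pascal-type) structure of ballot sets.

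\emph{Step 3 (sharpness).} Suppose fewer than $N$ directions are given, and take $k=\lfloor n/2\rfloor$. Consider $f=\sum_{|J|=k}a_J\mathbf x^J$, which is homogeneous of degree $k\ge 1$ and hence has $f(0)=0$; note also $k\ge 2$ since $n\ge3$. Then $f|_{R\mathbf v_i}$ is affine-linear as soon as $b_k(\mathbf v_i)=0$. These are fewer than $\binom nk$ homogeneous linear equations in $\binom nk$ unknowns over the nonzero commutative ring $R$. By McCoy's theorem they admit a nonzero solution. The resulting $f$ is multi-affine and affine-linear on every $R\mathbf v_i$, but it is not affine-linear, since $a_J\ne0$ for some $|J|=k\ge 2$.
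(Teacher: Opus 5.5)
Your Steps 1 and 3 follow the paper: the adjugate/Vandermonde argument, and McCoy's theorem for sharpness. Your directions also coincide with the paper's. The tops of the Greene--Kleitman chains are exactly the complements of ballot sets (or their mirror images, depending on the bracketing convention). The chains meeting level $k$ give exactly the rows $\mathcal S_{n,\le\min(k,n-k)}$ used in Theorem~\ref{thm:main}.

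There is a small slip in Step 1. The claimed ``if and only if'' is false over general rings: over $\mathbb F_2$, $f=x_1x_2$ is affine-linear on $R(1,1,0)$ because $t^2=t$, yet $g(t)=t^2\not\equiv 0$. You should carry $b_1(\mathbf v)-m_{\mathbf v}$ as the first unknown, as the paper does. The adjugate argument then still gives $b_k(\mathbf v)=0$ for $k\ge 2$.

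The genuine gap is Step 2, which is the real content of the theorem. You never prove that the $\binom nk\times\binom nk$ submatrices are unimodular, and the unitriangular ordering you propose does not exist. Take $n=4$, $k=2$, with the convention in which chain tops are complements of ballot sets.
\begin{itemize}
\item The chain through $\{1,4\}$ (word $1001$) has top $\{1,3,4\}\supseteq\{3,4\}$.
\item The chain through $\{3,4\}$ has top $[4]\supseteq\{1,4\}$.
\end{itemize}
So your rule forces $\{3,4\}$ before $\{1,4\}$ and also $\{1,4\}$ before $\{3,4\}$.

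The obstruction is intrinsic, not an artifact of your pairing. The rows $\{1,2\},\{1,3\},\{1,2,3\}$ force the diagonal columns $\{1,2\},\{1,3\},\{2,3\}$. What remains, with rows $\{1,2,4\},\{1,3,4\},[4]$ and columns $\{1,4\},\{2,4\},\{3,4\}$, is
\[
\begin{pmatrix} 1&1&0\\ 1&0&1\\ 1&1&1 \end{pmatrix},
\]
which has no row or column with a single $1$. So this matrix (it is $D_{4,2}$) has determinant $\pm1$ but is not permutation-equivalent to any triangular matrix, and no choice of ordering can rescue the argument.

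Your fallback, ``induction on $n$ via the Pascal structure'', is only a placeholder. For $k\le n/2$ the matrix that actually arises is the disjointness matrix $D_{n,k}$, and it does not split block-triangularly. For $n>2k$, separating by whether $n\in I$ and $n\in J$ gives $\left(\begin{smallmatrix}0&X\\ Y&Z\end{smallmatrix}\right)$ with non-square $X$ and $Y$.

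The missing idea is the paper's change of matrix:
\begin{itemize}
\item Write $D_{n,k}=T_{n,k}C_{n,k}$ with $T_{n,k}(I,I')=(-1)^{|I'|}\mathbbm{1}[I'\subseteq I]$. This $T_{n,k}$ is triangular with $\pm1$ diagonal by inclusion--exclusion, because ballot sets form a down-closed family.
\item The containment matrix $C_{n,k}$ \emph{is} block lower triangular, with diagonal blocks $C_{n-1,k-1}$ and $C_{n-1,k}$.
\item The boundary case $n=2k$ is handled by $C_{2k-1,k}\simeq D_{2k-1,k-1}$, via complementing the columns.
\end{itemize}
Without this, or some other argument that does not rely on triangularity, Step 2 is not established.
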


This result subsumes Theorem~\ref{TKT}, as well as Theorem~\ref{TKT1}
(given that $p_n(1, 1+1, \dots, n)$ is a non-zerodivisor in $R$, so ${\rm
Ann}_R(I_{p_n}) = 0$). This is because if $n \ge 3$ then $2^n - (n+1) \ge
N = \binom{n}{\lfloor n/2 \rfloor}$.

\begin{remark}\label{Rfield}
Theorem~\ref{T1} addresses the case of all $n \ge 3$. For $n=2$, the
sharp bound was $1 < N = 2$, by Theorem~\ref{TKT1}.
\end{remark}

\subsection{A tight bound: II.~Necessity for products of Kasch-type
rings}\label{Skasch}

If $R = \F$ is a field -- or even an integral domain -- then it is easy
to see that $R$ being $n$-Vandermonde full simply means $|R| > n$. Thus,
Theorem~\ref{T1} extends Theorem~\ref{TKT} from domains of size $>
2^{n-1}$ to all domains of size $>n$ (noting that finite integral domains
are fields). The reader may wonder if one can reduce the size even
further. The next result explains why one cannot, so that
Theorem~\ref{T1} is the ``best possible'' along these lines. In fact, it
is best possible not only for $R$ a domain, but for a large class
$\mathfrak{R}$ of rings, which we describe before our next result.

\begin{definition}\label{DR}
As above, $R$ is a unital commutative ring. Denote by $ZD(R)$ its
zerodivisor set -- so the complement $ZD(R)^c$ is multiplicatively
closed. Next, define the \textit{total ring of fractions} of $R$ to be
the localization
\[
Q(R) := (ZD(R)^c)^{-1} \cdot R.
\]
Finally, we say $R$ is a \textit{Kasch ring} \cite[Chapter~8C]{Lam} if
every maximal ideal in $R$ has nonzero annihilator in $R$.
\end{definition}

The family $\mathfrak{R}$ can now be defined:

\begin{definition}\label{Dkasch}
Let $\mathfrak{R}$ consist of all unital commutative rings
$\times_{i \in I} R_i$, where
\begin{enumerate}
\item $I$ is a nonempty index set, and
\item each $R_i$ is a unital commutative ring whose total ring of
fractions $Q(R_i)$ is Kasch.
\end{enumerate}
\end{definition}

Before proceeding further, we give some examples of such rings.

\begin{lemma}\hfill
\begin{enumerate}
\item Every integral domain, hence field, satisfies
Definition~\ref{Dkasch}(2) and hence is in $\mathfrak{R}$.

\item If $(R, \mfm)$ is a local ring with nilpotent maximal ideal $\mfm$,
then $R$ satisfies Definition~\ref{Dkasch}(2) and hence is in
$\mathfrak{R}$.

\item If $k \ge 1$ and rings $R_1, \dots, R_k$ satisfy
Definition~\ref{Dkasch}(2) then so does $R_1 \times \cdots \times R_k$.

\item In particular, $\mathfrak{R}$ contains all Artinian rings (e.g.\
all finite unital commutative rings). More strongly, every Artinian ring
satisfies Definition~\ref{Dkasch}(2).
\end{enumerate}
\end{lemma}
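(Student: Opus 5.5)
We prove the four parts in order; the last one rests on a structure theorem for Artinian rings.

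For (1), if $R$ is an integral domain then $ZD(R)=\{0\}$. Hence $Q(R)$ is the fraction field. A field has unique maximal ideal $0$, whose annihilator is the whole field, which is nonzero. So $Q(R)$ is Kasch and $R \in \mathfrak{R}$, taking $|I|=1$.

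For (2), let $(R,\mfm)$ be local with $\mfm$ nilpotent. Every element outside $\mfm$ is a unit. Every element of $\mfm$ is nilpotent, hence a zerodivisor under the paper's convention that $0$ counts as a zerodivisor. So $ZD(R)^c=R^\times$ and $Q(R)=R$. The only maximal ideal is $\mfm$. Choose $t\ge 1$ minimal with $\mfm^t=0$. If $t=1$ then $\mfm=0$, $R$ is a field, and we are in case (1). Otherwise $\mfm^{t-1}\neq 0$ and $\mfm\cdot\mfm^{t-1}=0$, so $\mathrm{Ann}_R(\mfm)\supseteq\mfm^{t-1}\neq 0$. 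Thus $R$ is Kasch.

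For (3), put $R=R_1\times\cdots\times R_k$. The element $(r_1,\dots,r_k)$ is a non-zerodivisor exactly when each $r_i$ is a non-zerodivisor in $R_i$: if some $r_i$ has $r_is_i=0$ with $s_i\ne0$, multiply by $s_i$ placed in slot $i$. Hence $ZD(R)^c=\prod_i ZD(R_i)^c$. Localization then commutes with the finite product, giving $Q(R)\cong \prod_i Q(R_i)$. I would check this via the evident map, which is injective and surjective: a fraction with denominators in each coordinate uses the product of those denominators.

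The maximal ideals of a finite product $\prod_i Q_i$ all have the form $Q_1\times\cdots\times\mathfrak{n}_j\times\cdots\times Q_k$ with $\mathfrak{n}_j$ maximal in $Q_j$. The annihilator of such an ideal contains $0\times\cdots\times\mathrm{Ann}_{Q_j}(\mathfrak{n}_j)\times\cdots\times 0$, which is nonzero. So $Q(R)$ is Kasch.

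For (4), use the structure theorem: a commutative Artinian ring is a finite product of Artinian local rings. In an Artinian local ring the maximal ideal equals the Jacobson radical, which is nilpotent. Parts (2) and (3) then show that every Artinian ring satisfies Definition~\ref{Dkasch}(2), so it lies in $\mathfrak{R}$. Finite rings are Artinian.

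The only non-formal step is (3), namely checking carefully that $Q$ commutes with finite products. This is where the coordinatewise description of the zerodivisors is essential.
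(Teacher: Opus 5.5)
Your proposal is correct and follows essentially the same route as the paper. In both, domains are handled via the fraction field. In the local case, $ZD(R)=\mfm$ gives $Q(R)=R$, and the last nonzero power of $\mfm$ annihilates $\mfm$. For finite products, $Q(R_1\times\cdots\times R_k)\cong\prod_i Q(R_i)$ and the maximal ideals have product form. The Artinian case then follows from the structure theorem together with parts (2) and (3). Your version only spells out details the paper leaves implicit: the coordinatewise description of non-zerodivisors, the bijectivity of the map $Q(R)\to\prod_i Q(R_i)$, and the degenerate case $\mfm=0$.
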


\begin{proof}
The first statement (for integral domains) holds since the only maximal
ideal of $Q(R)$ here is $0$. The second holds because $ZD(R) = \mfm$ and
so $Q(R) = R$, and its unique maximal ideal is $\mfm$. Now if $\mfm^k
\neq 0 = \mfm^{k+1}$, then any $0 \neq r \in \mfm^k$ annihilates $\mfm$.

To show the third statement, note that
$ZD(R)^c = \times_{i=1}^k (R_i \setminus ZD(R_i))$. Therefore,
\[
Q(R) = \left( \times_{i=1}^k (ZD(R_i)^c \right)^{-1} \cdot \times_{i=1}^k
R_i = \times_{i=1}^k (ZD(R_i)^c)^{-1} R_i = \times_{i=1}^k Q(R_i).
\]
Hence every maximal ideal in $Q(R)$ is of the form $\mfm_i \times
(\times_{j \neq i} Q(R_j)) = Q(R) \cdot ({\bf 1}^{i-1}, \mfm_i, {\bf
1}^{n-i})$. Since $Q(R_i)$ is Kasch, suppose $\frac{a_i}{s_i} \mfm_i = 0$
for some $0 \neq a_i \in R_i$ and $s_i \in R_i \setminus ZD(R_i)$. Then
$a_i \cdot \mfm_i \times (\times_{j \neq i} Q(R_j)) = 0$, whence $Q(R)$
is indeed Kasch.

The final assertion follows from (the third and) the second assertion by
the structure theorem for Artinian rings, see e.g.\ \cite[Proposition~8.6
and Theorem~8.7]{AMcD}.
\end{proof}

The reader may wonder if every local ring $(R, \mfm)$ satisfying
Definition~\ref{Dkasch}(2) must have $\mfm$ nilpotent. The following
example shows that is not the case, so that our family $\mathfrak{R}$ is
larger.

\begin{example}\label{Exgemini}
Let $\F$ be any field and set $R := \F[[x]] \times \F$, with
componentwise addition and with (commutative) multiplication given by:
\[
(f(x),r) \cdot (g(x),s) := (f(x)g(x), f(0)s + g(0)r).
\]
The multiplicative identity here is $(1,0)$, so the units are $(f(x),r)$
with $f(0) \neq 0$: we have $(f(x),r)^{-1} = (1/f(x), -r / f(0)^2)$. In
particular, the set of non-units is $\mfm := \{ (f(x),r) : f(0) = 0 \}$.
It is clear that $\mfm$ is an ideal, so it is maximal and $R$ is local.
However, $(x,0) \in \mfm$ is not nilpotent: $(x,0)^m = (x^m,0)$ for all
$m \ge 1$; and yet $(0,1) \cdot \mfm = 0$. \qed
\end{example}

The preceding example is not Artinian, given the decreasing
sequence of ideals $(x) \times \F \supset (x^2) \times \F \supset
\cdots$. However, it is a product of Noetherian rings, hence Noetherian.
Thus it is natural to ask if $\mathfrak{R}$ contains Noetherian rings --
and this is indeed true:

\begin{proposition}\label{Pnoetherian}
Every Noetherian ring satisfies Definition~\ref{Dkasch}(2), and hence
arbitrary products of Noetherian rings are in $\mathfrak{R}$.
\end{proposition}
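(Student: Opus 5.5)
To prove Proposition~\ref{Pnoetherian}, let $R$ be Noetherian and set $S := ZD(R)^c$ and $Q := Q(R) = S^{-1}R$. The second assertion follows immediately from the first by Definition~\ref{Dkasch}: take each $R_i$ Noetherian. So the task is to show that every maximal ideal of $Q$ has nonzero annihilator in $Q$.

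The plan is to use associated primes. Since $R$ is Noetherian, $ZD(R)$ is the union of the finitely many associated primes $\mathfrak{p}_1, \dots, \mathfrak{p}_t$ of $R$. For this to work we need $R \neq 0$, so that $t \geq 1$; if $R=0$ then $Q = 0$ has no maximal ideals and there is nothing to prove. Each $\mathfrak{p}_i = {\rm Ann}_R(a_i)$ for some $a_i \in R$, with $a_i \neq 0$.

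Now let $M$ be a maximal ideal of $Q$, and let $\mathfrak{q}$ be its contraction to $R$. Then $\mathfrak{q}$ is a prime with $\mathfrak{q} \cap S = \emptyset$, so $\mathfrak{q} \subseteq ZD(R) = \bigcup_i \mathfrak{p}_i$. By prime avoidance, $\mathfrak{q} \subseteq \mathfrak{p}_i$ for some $i$. Since $\mathfrak{p}_i \subseteq ZD(R)$ also misses $S$, the extension $S^{-1}\mathfrak{p}_i$ is a proper (prime) ideal of $Q$. It contains $S^{-1}\mathfrak{q} = M$, so by maximality $M = S^{-1}\mathfrak{p}_i$.

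It remains to check that $a_i/1$ annihilates $M$ and is nonzero in $Q$.
\begin{itemize}
\item The annihilation is immediate: for $p \in \mathfrak{p}_i$ and $s \in S$ we have $(a_i/1)(p/s) = a_ip/s = 0$.
\item For nonvanishing, $a_i/1 = 0$ in $Q$ would require $s a_i = 0$ for some $s \in S$. But then $s \in {\rm Ann}_R(a_i) = \mathfrak{p}_i \subseteq ZD(R)$, contradicting $s \in S$.
\end{itemize}
Hence ${\rm Ann}_Q(M) \neq 0$, so $Q$ is Kasch.

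The only genuine input is the Noetherian structure theory, namely that $ZD(R)$ is a finite union of associated primes, each of the form ${\rm Ann}(a)$, together with prime avoidance. This is where the Noetherian hypothesis is essential; everything else is routine localization bookkeeping. One could equally phrase the argument via ${\rm Ass}(Q) = \{S^{-1}\mathfrak{p}_i\}$, noting that these are exactly the maximal ideals of $Q$.
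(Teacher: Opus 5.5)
Your proof is correct and takes the same route as the paper: contract the maximal ideal of $Q(R)$ to a prime of $R$ contained in $ZD(R)=\bigcup_{\mathfrak{p}\in{\rm Ass}(R)}\mathfrak{p}$, use prime avoidance to place it inside a single associated prime ${\rm Ann}_R(a)$, and check that $a/1$ is a nonzero annihilator in $Q(R)$. Your version is slightly more careful than the paper's, which calls the contracted ideal a ``maximal ideal'' of $R$ when it is only a prime disjoint from $ZD(R)^c$; your treatment of $R=0$ and of $a_i/1\neq 0$ is also fine.
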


\noindent (This is proved below.) With these notions and preliminaries,
we can now state our second main result, whose final assertion is for all
rings in $\mathfrak{R}$:

\begin{utheorem}\label{T2}
Fix an integer $n \ge 3$. Then for any unital commutative ring $R$ and
any multi-affine map $f : R^n \to R$, each assertion below implies the
next:
\begin{enumerate}
\item $f$ is affine-linear on all of $R^n$.

\item $f|_{R {\bf v}}$ is affine-linear for all directions ${\bf v} \in
R^n$. That is $f(r \cdot {\bf v}) = f(0) + r m_{\bf v}$ for all $r \in
R$, where the ``slope'' $m_{\bf v} = f({\bf v}) - f(0)$.

\item There exist $N = \binom{n}{\lfloor n/2 \rfloor}$ ``ballot
directions'' ${\bf v}_i$ -- in fact all belonging to $\{ 0, 1 \}^n$,
independent of $R$, and depending only on $n$ -- such that $f|_{R{\bf
v}_i}$ is affine-linear for $1 \leq i \leq N$.
\end{enumerate}

Moreover, the following dichotomy holds:
\begin{itemize}
\item[(a)] If $R$ is $n$-Vandermonde full, then the above assertions are
equivalent: $(3) \implies (1)$. Moreover, $N$ is the best possible bound.

\item[(b)] If $R$ is not $n$-Vandermonde full -- and additionally, $R \in
\mathfrak{R}$, then the assertions are not equivalent. More strongly,
$(2)$~does not imply~$(1)$.
\end{itemize}
\end{utheorem}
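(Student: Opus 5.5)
The three implications $(1) \Rightarrow (2) \Rightarrow (3)$ are immediate: $(2)$ is $(1)$ restricted to lines through the origin, and $(3)$ is $(2)$ for particular directions. So the plan concentrates on (a) and (b).

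\emph{Part (a).} Write $f = \sum_J a_J {\bf x}^J$ using Proposition~\ref{Pmultiaffine}. For a direction ${\bf v}$ we have $f(r{\bf v}) = \sum_{k=0}^n r^k c_k({\bf v})$, where $c_k({\bf v}) := \sum_{|J|=k} a_J {\bf v}^J$. Affine-linearity on $R{\bf v}$ then says $\sum_{k=2}^n r^k c_k({\bf v}) = 0$ for every $r \in R$.

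First I remove the powers of $r$. Choose any ${\bf r} = (r_1,\dots,r_n) \in R^n$ and form the square system $\sum_{k=1}^n r_i^k c'_k = 0$, where $c'_1 := 0$ and $c'_k := c_k({\bf v})$ for $k \ge 2$. Its coefficient matrix has determinant $\pm p_n({\bf r})$. Multiplying by the adjugate gives $p_n({\bf r})\, c_k({\bf v}) = 0$ for all ${\bf r}$, so $I_{p_n}$ annihilates each $c_k({\bf v})$. By $n$-Vandermonde fullness, $c_k({\bf v}) = 0$ for every $k \ge 2$.

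Now fix $k \ge 2$. Each $0$-$1$ direction ${\bf v}$ with support $S$ yields the linear equation $\sum_{J \subseteq S,\, |J|=k} a_J = 0$ in the $\binom{n}{k}$ unknowns $a_J$. The task is to choose $N$ supports, the ballot sets, such that for every $k$ some $\binom{n}{k}$ of these equations form a square $0$-$1$ matrix with determinant $\pm 1$ over $\mathbb{Z}$, hence invertible over every ring. That forces $a_J = 0$ for all $|J| \ge 2$, which is $(1)$.

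I expect constructing these universally unimodular ballot matrices (Theorem~\ref{thm:main}) to be the main obstacle. I would index rows and columns by ballot-type subsets and prove unimodularity by a triangularity argument, ordering the sets so that each row's support contains its diagonal $k$-subset and only later $k$-subsets, possibly after unimodular row operations.

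For sharpness, take $k = \lfloor n/2 \rfloor$. With fewer than $N = \binom{n}{k}$ directions, the homogeneous linear system in the $\binom{n}{k}$ unknowns $(a_J)_{|J|=k}$ has fewer equations than unknowns. By McCoy's theorem it then has a nonzero solution over the nonzero ring $R$. Setting all other $a_J = 0$ gives a multi-affine $f$ that is affine along every chosen direction but not affine-linear.

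\emph{Part (b).} Let $R = \times_i R_i$. The image of $p_n$ is computed componentwise, so a nonzero annihilator of $I_{p_n}(R)$ forces some factor $R_i$ to be not $n$-Vandermonde full. A counterexample on $R_i$, extended by zero in the other coordinates, then works on $R$, so I may assume $R = R_i$ with $Q := Q(R)$ Kasch.

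The ideal $I_{p_n}(Q)$ is generated by the elements $p_n({\bf r})/s^{n(n+1)/2}$, so it has the same nonzero annihilator and in particular is proper. Hence it lies in a maximal ideal $\mfm$ of $Q$. By the Kasch property, $\mfm$ has a nonzero annihilator $b$, and clearing denominators I may take $b = a \in R$ with $a \ne 0$.

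Since $p_n$ vanishes identically on $\F := Q/\mfm$, we get $q := |\F| \le n$, and $q \ge 2$. Set $f({\bf x}) := a\, x_1 x_2 \cdots x_q$. Because $a\mfm = 0$, the product $a s$ depends only on $s \bmod \mfm$, and $r^q \equiv r \pmod{\mfm}$ for all $r$ since $\F$ is a finite field of size $q$. Therefore $f(r{\bf v}) = a r^q v_1 \cdots v_q = r f({\bf v})$, which gives $(2)$.

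Finally, $f$ vanishes whenever any of $x_1,\dots,x_q$ is $0$. An affine-linear map with this property must vanish on each coordinate axis, so it is identically zero. But $f(e_1 + \cdots + e_q) = a \ne 0$, so $(1)$ fails.
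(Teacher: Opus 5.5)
Your overall route matches the paper's. For (a) you use the Vandermonde/adjugate step plus Theorem~\ref{thm:main}, and McCoy's theorem for sharpness. For (b) you reduce componentwise to one factor, find a maximal ideal $\mfm$ of $Q(R_{i_0})$ with residue field of size $q\le n$ and a Kasch annihilator $a$, and use the monomial $a x_1\cdots x_q$. Inlining Proposition~\ref{Pgemini} by working in $Q/\mfm$ rather than $R/(\mfm\cap R)$ changes nothing, and your part (b) is correct.

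Two steps in (a), however, fail as written.

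\emph{The linear term.} Affine-linearity on $R\mathbf{v}$ gives $r\,(c_1(\mathbf{v})-m_{\mathbf{v}})+\sum_{k\ge 2}r^k c_k(\mathbf{v})=0$, not $\sum_{k\ge2}r^kc_k(\mathbf{v})=0$. The slope $m_{\mathbf{v}}=\sum_{k\ge1}c_k(\mathbf{v})$ need not equal $c_1(\mathbf{v})$ a priori: over $\F_2$, $f=x_1x_2$ and $\mathbf{v}=(1,1,0)$ give $f(r\mathbf{v})=r^2=r$. So setting $c'_1:=0$ is unjustified. Take $c'_1:=c_1(\mathbf{v})-m_{\mathbf{v}}$ as the first unknown instead; the adjugate argument then goes through unchanged.

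\emph{Sharpness for $n=3$.} Here $\lfloor n/2\rfloor=1$. Affine-linearity imposes no condition on $c_1(\mathbf{v}_i)$, and your $f=\sum_j a_j x_j$ is affine-linear, so you get no counterexample. Use $k=\lceil n/2\rceil$ instead, which is $\ge 2$ for $n\ge3$ and still has $\binom{n}{k}=N$. A nonzero kernel vector $(a_J)_{|J|=k}$ then gives an $f$ that vanishes on every $R\mathbf{v}_i$ but is not affine-linear, by uniqueness of multi-affine coefficients (inclusion--exclusion over $0$-$1$ points). The paper's own argument in Section~\ref{Smccoy} makes the same slip.

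Citing Theorem~\ref{thm:main} is legitimate, since the paper's proof of (a) does exactly that. Your sketch of it, though, misses the actual mechanism.
\begin{itemize}
\item The supports must be the complements $[n]\setminus B$ of ballot sets, not the ballot sets themselves. Ballot sets have size at most $n/2$, so they give zero rows once $k>n/2$.
\item For $1\le k<n/2$, every row of $A_{n,k}$ has at least $k+1\ge 2$ ones, so no reordering of axes makes it triangular.
\item What works for $k\le n/2$ is the M\"obius inversion $T_{n,k}$, which turns the disjointness matrix $D_{n,k}\simeq A_{n,k}$ into the containment matrix $C_{n,k}$. That matrix is block lower triangular with diagonal blocks $C_{n-1,k-1}$ and $C_{n-1,k}$.
\item The induction closes via $C_{2k-1,k}\simeq D_{2k-1,k-1}$ when $n=2k$.
\item For $k>n/2$, one uses $A_{n,k}\simeq C_{n,n-k}$ directly.
\end{itemize}
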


Given the line after Remark~\ref{Rfield}, we can reformulate
Theorem~\ref{T2} in the special case of $R = \F$ a field (and $n \ge 3$)
as follows. We have $|\F| > n$ if and only if: a multi-affine polynomial
$f : \F^n \to \F$ being affine-linear on $\F {\bf v}$ for every ${\bf v}
\in \F^n$ (equivalently, for all $0$-$1$ vectors ${\bf v}$, or the even
smaller set of ballot directions), implies $f$ is affine-linear on
$\F^n$.

\subsection{A numerical invariant for affine-linearity}

Our final main result derives from Theorems~\ref{T1} and~\ref{T2} a
numerical invariant $\tau(R)$ for affine-linearity. We first consider
$\F$ an integral domain or a field. In light of Theorem~\ref{T2}, the
first and last paragraphs in Section~\ref{Skasch} reveal an invariant of
$\F$ -- its size $\tau(\F) := |\F|$ -- such that $\tau(\F) > n$ is
equivalent to $\F$ being $n$-Vandermonde full. Now Theorem~\ref{T2}
reveals a dichotomy depending on whether or not $\tau(\F) > n$, about
whether or not a multi-affine map that is affine in every direction (or
just along the ballot directions) is globally affine-linear.

The results in this work extend both this equivalence and this dichotomy,
from fields $\F$ to all rings $R \in \mathfrak{R}$:

\begin{utheorem}\label{T3}
Let $R = \times_{i \in I} R_i$ be in $\mathfrak{R}$, with all $Q(R_i)$
Kasch. Define
\begin{equation}\label{Etau}
\tau (R) = \tau \left( \times_{i \in I} R_i \right) :=
\min_{i \in I} \min_{\mfm_i \in {\rm MaxSpec}\; Q(R_i)}
|R_i / (\mfm_i \cap R_i)|
\end{equation}
(which is possibly infinite).
Now given an integer $n \ge 3$, the following are equivalent:
\begin{enumerate}
\item $R$ is $n$-Vandermonde full.

\item $\tau(R) > n$.

\item If $f : R^n \to R$ is a multi-affine polynomial, and $f|_{R {\bf
v}}$ is affine-linear for (all ballot, or) all directions ${\bf v} \in
R^n$, then $f$ is affine-linear on $R^n$.
\end{enumerate}
\end{utheorem}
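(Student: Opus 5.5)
The plan is to get $(1)\Leftrightarrow(3)$ from Theorem~\ref{T2}, and to prove $(1)\Leftrightarrow(2)$ by a direct ring-theoretic argument that reduces to the factors $R_i$ and then to residue fields.

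\textbf{Step 1: $(1)\Leftrightarrow(3)$.} If $R$ is $n$-Vandermonde full, Theorem~\ref{T2}(a) gives $(3)$ in both versions (ballot directions, or all directions). Conversely, suppose $R$ is not $n$-Vandermonde full. Since $R\in\mathfrak{R}$, Theorem~\ref{T2}(b) gives a multi-affine $f$ that is affine on every line $R{\bf v}$ but not affine-linear. This $f$ is affine along the ballot directions in particular, so both versions of $(3)$ fail.

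\textbf{Step 2: reduce $(1)$ to the factors.} I claim $R=\times_i R_i$ is $n$-Vandermonde full if and only if every $R_i$ is.

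\emph{Image of $p_n$.} Since $p_n$ is evaluated componentwise, ${\rm im}(p_n)$ over $R$ is exactly the product of the images over the $R_i$.

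\emph{Ideal.} Hence $I_{p_n}(R)$ contains every element supported on a single coordinate $i$ with entry in $I_{p_n}(R_i)$. It is contained in $\times_i I_{p_n}(R_i)$.

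\emph{Annihilator.} Let $a$ annihilate $I_{p_n}(R)$. Multiplying $a$ by single-coordinate elements shows $a_i\in{\rm Ann}_{R_i}(I_{p_n}(R_i))$ for every $i$. The converse direction is immediate.

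By the same product decomposition, $\tau(R)=\min_i\tau(R_i)$. So it suffices to show, for a single ring $R$ with $Q=Q(R)$ Kasch, that ${\rm Ann}_R(I_{p_n})=0$ if and only if $|R/(\mfm\cap R)|>n$ for every maximal ideal $\mfm$ of $Q$.

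\textbf{Step 3: the single-ring equivalence.} Write $\mfm$ for a maximal ideal of $Q$ and $\mathfrak p:=\mfm\cap R$, which is prime.

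\emph{Failure of (2) forces failure of (1).} Suppose $|R/\mathfrak p|\leq n$. Then $R/\mathfrak p$ is a finite domain, hence a field with at most $n$ elements. For any ${\bf r}\in R^n$, the $n+1$ values $0,r_1,\dots,r_n$ cannot be distinct mod $\mathfrak p$, so $p_n({\bf r})\in\mathfrak p\subseteq\mfm$. Thus $I_{p_n}\subseteq\mfm$. Since $Q$ is Kasch, some $0\neq q=a/s$ satisfies $q\mfm=0$, and then $a\neq 0$ in $R$ (as $s$ is regular). We get $a\, I_{p_n}\subseteq a\mfm=s\,q\,\mfm=0$ inside $Q$. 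Since $R\hookrightarrow Q$, also $a\,I_{p_n}=0$ in $R$, so $R$ is not $n$-Vandermonde full.

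\emph{Failure of (1) forces failure of (2).} Suppose $0\neq a\in R$ annihilates $I_{p_n}$. Then $a$ annihilates $I_{p_n}Q$, so $I_{p_n}Q$ consists of zerodivisors of $Q$ and is a proper ideal. Hence $I_{p_n}Q\subseteq\mfm$ for some maximal $\mfm$, which gives $I_{p_n}\subseteq\mathfrak p$. Therefore $p_n$ vanishes identically on $(R/\mathfrak p)^n$. If $R/\mathfrak p$ had more than $n$ elements, choose $r_1,\dots,r_n$ whose images are distinct and nonzero in this domain. Then $p_n({\bf r})\notin\mathfrak p$, a contradiction. So $|R/\mathfrak p|\leq n$ and $\tau(R)\leq n$.

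\textbf{Main obstacle.} The delicate point is the first half of Step 3, which turns ``$I_{p_n}$ lies in the contraction of a maximal ideal of $Q$'' into an honest nonzero annihilator in $R$. This is the only place the Kasch hypothesis on $Q(R_i)$ is used. It needs the denominators to be regular and the injectivity $R\hookrightarrow Q(R)$. I would check that this interplay holds with no Noetherian assumption.
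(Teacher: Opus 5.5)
Your proposal is correct and is essentially the paper's own proof. Step~2 is the annihilator identity \eqref{Eclaim}, together with $\tau(R)=\min_i\tau(R_i)$, which is immediate from \eqref{Etau}. Step~3 is precisely Proposition~\ref{Pgemini}, with the Kasch property used in the same way to turn $I_{p_n}\subseteq \mfm\cap R$ into a nonzero annihilator. Step~1 combines Theorem~\ref{T1} with the explicit counterexample built in the proof of Theorem~\ref{T2}(b). The paper phrases that last implication as $\neg(2)\Rightarrow\neg(3)$ rather than $\neg(1)\Rightarrow\neg(3)$, but this is the same argument: the proof of Theorem~\ref{T2}(b) itself passes through \eqref{Eclaim} and Proposition~\ref{Pgemini}, and it also uses the Kasch hypothesis, to produce $b\neq 0$ with $bP=0$.
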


In the special case of $R$ a field or even an integral domain, indeed
$\tau(R) = |R|$.

\section{Theorem~\ref{T1}: reduction to universal nonsingular 0-1 matrices}\label{Smccoy}

We address Theorems~\ref{T2} and~\ref{T3} in Section~\ref{SthmB}. In this
section and the next, we prove Theorem~\ref{T1}. We begin by addressing
the characterization of multi-affine maps over any ground ring:

\begin{proof}[Proof of Proposition~\ref{Pmultiaffine}]
The forward direction was proved in \cite[pp.~765--766]{KT}. Conversely,
for all ${\bf m} = (m_1, \dots, m_n) \in R^n$,
$r \in R$, and $1 \leq i \leq n$, we compute:
\[
f({\bf m} + r \cdot e_i) = \sum_{i \not\in J \subseteq [n]} a_J {\bf m}^J
+ (m_i + r) \sum_{i \in J \subseteq [n]} a_J {\bf m}^{J \setminus \{ i 
\}} = f({\bf m}) + r \sum_{i \in J \subseteq [n]} a_J {\bf m}^{J
\setminus \{ i \}},
\]
which is clearly affine-linear on ${\bf m} + R e_i$.
\end{proof}

We next turn to Theorem~\ref{T1}. If $f$ is affine-linear on $R \cdot
{\bf v}$ for some ${\bf v} \in R^n$, then for all $r \in R$,
\begin{equation}\label{Evandermonde}
f(r \cdot {\bf v}) = \sum_{k=0}^n r^k c({\bf v},k), \qquad \text{where }
c({\bf v},k) := \sum_{|J|=k} a_J {\bf v}^J.
\end{equation}
Here $c({\bf v},0) = f(0)$; by affine-linearity, we also have $f(r \cdot
{\bf v}) = f(0) + r \cdot m_{\bf v}$ for some $m_{\bf v} \in R$.
Equating,
\begin{equation}\label{Evan}
r (c({\bf v},1) - m_{\bf v}) + \sum_{k=2}^n r^k c({\bf v},k) =
0, \qquad \forall r \in R.
\end{equation}

Given the hypotheses, specialize this to any $n$-tuple $r = r_1, \dots,
r_n$, and repackage into a system of linear equations. The coefficient
matrix is a Vandermonde matrix with determinant
$\prod_{j=1}^n r_j \cdot \prod_{1 \leq j<k \leq n} (r_j-r_k) = p_n({\bf
r})$. Pre-multiplying by the adjugate matrix, we have that $p_n({\bf r})
c({\bf v},k) = 0$ for all ${\bf r} \in R^n$. Thus $c({\bf v},k)$
annihilates $I_{p_n}(R)$. But $R$ is $n$-Vandermonde full, and so $c({\bf
v},k) = 0$ whenever $2 \leq k \leq n$. Hence $f|_{R \cdot {\bf v}}$ is
affine-linear.

Recall that our goal is to show in~\eqref{EaJ} that $a_J = 0$ for all
$|J| \ge 2$, i.e.\ for all $a_J$ occurring in $c({\bf v},k)$ for $2 \leq
k \leq n$.
We achieve this (in a ``universal'' fashion) in the next section, via a
careful choice of $N$-many ballot directions ${\bf v}_1, \dots, {\bf
v}_N$, as alluded to in Theorem~\ref{T2}(3).

For now, we end by showing the final part of Theorem~\ref{T1}: if $f$ is
affine-linear along $R \cdot {\bf v}_1, \dots, R \cdot {\bf v}_m$, and $m
< N = \binom{n}{\lfloor n/2 \rfloor}$, then this \textit{does not
guarantee} all $a_J = 0$ (for our fixed ground ring $R$).

To see why, let $k = \lfloor n/2 \rfloor$. From above, we have
\[
0 = c({\bf v}_i, \lfloor n/2 \rfloor) = \sum_{|J| = \lfloor n/2 \rfloor}
a_J ({\bf v}_i)^J, \qquad \forall 1 \leq i \leq m.
\]
This is a system $A_{m \times N} \cdot {\bf a} = 0$ of fewer-than-$N$
linear equations, in $N$ variables ${\bf a} = (a_J)_{|J| = \lfloor n/2
\rfloor}$. Therefore $\ker(A) \neq 0$ as desired, by classical results of
McCoy~\cite{McCoy}.\footnote{As this may not be universally known, and
was not explained in~\cite{KT}, we sketch a proof.
Write $A = [A_1 | A_2]$ with $A_1 \in R^{m \times m}$. First if
$\det(A_1) \neq 0$, then set ${\bf a}_2 := - (\det A_1) \mbm{1}_{N-m}
\neq 0$ and ${\bf a}_1 := {\rm adj}(A_1) A_2 \mbm{1}_{N-m}$, where ${\rm
adj}(A)$ denotes the adjugate matrix of $A$. Then $A \begin{pmatrix} {\bf
a}_1 \\ {\bf a}_2 \end{pmatrix} = 0$, whence $\ker(A) \neq 0$. Else if
$\det(A_1) = 0$, the Corollary in \cite{McCoy} yields ${\bf a}_1 \neq 0$
such that $A_1 {\bf a}_1 = 0$. Now set ${\bf a}_2 = 0$.}
\qed

\section{Special directions, via ballot sets}\label{Sballot}

By the above discussion following~\eqref{Evandermonde}, to complete the
proof of Theorem~\ref{T1} it remains to select $N$-many ``special''
directions ${\bf v}_i \in R^n$, each of them with at least two nonzero
coordinates (we will use here that $n \ge 3$ and so $\lceil n/2 \rceil
\ge 2$), such that the conditions
\[
0 = c({\bf v}_i, k) = \sum_{|J| = k} a_J ({\bf v}_i)^J, \qquad \forall 1
\leq i \leq N = \binom{n}{\lfloor n/2 \rfloor}
\]
imply $a_J = 0$ for all $2 \leq k \leq n$ and all $|J|=k$ (over the
ground ring $R$ in Theorem~\ref{T1}).

Remarkably, one can produce such vectors ${\bf v}_i$ \textit{with all
coordinates either $0$ or $1$} -- and which work at once in
\textit{every} unital commutative ground ring $R$. Translating into the
supports $S_i := \{ j \in [n] : ({\bf v}_i)_j = 1 \}$ of these ${\bf
v}_i$ produces a linear system of $\binom{n}{k}$ equations in as many
variables:
\begin{equation}\label{EdefineAnk}
A_{n,k} {\bf a}^{(k)} = 0, \qquad \text{where }
{\bf a}^{(k)} = (a_J)_{|J|=k} \text{ and the entry } (A_{n,k})_{i,J} =
\mbm{1}(S_i \supseteq J)
\end{equation}
for all $i \in \left[ \binom{n}{k} \right]$ and $J \in \binom{[n]}{k}$.

It thus suffices to produce $N$-many such sets $S_i$ such that $A_{n,2},
\dots, A_{n,n}$ (defined in~\eqref{EdefineAnk}) are nonsingular matrices.
More strongly, we will provide an explicit construction of such sets
$S_i$, which are ``universal'' in that they work simultaneously for every
unital commutative $R$, for all $0 \leq k \leq n$. In particular, using
$R = \mathbb{Z} / q \mathbb{Z}$ for all integers $q$ suggests that $\det
A_{n,k}$ would need to be $\pm 1$ for all $k \leq n$.

\subsection{Ballot sets and unimodular $0$-$1$ matrices}

We now indeed construct such ``universal'' 0-1 matrices, starting with
some notation. Recall that a square integer matrix is \emph{unimodular}
if its determinant is $\pm 1$. We will only care about the structure of
matrices up to independently reordering the rows and the columns, since
that maintains unimodularity. We will denote such equivalence of matrices
by $C_1\simeq C_2$.

\begin{definition}[Ballot set]\label{def:ballot}
We say that a set $B = \{x_1,\ldots,x_k\}\subseteq[n]$ with $x_1 < \cdots
< x_k$ is a \emph{ballot set} if $x_i \ge 2i$ for all $1\leq i\leq k$.
(For the choice of name, see the proof of
Proposition~\ref{prop:ballot-counting}.)
By convention, the empty set is ballot. Also set $\binom{n}{-1} := 0$.

Let $\mc{B}_{n,k} = \{B\in\binom{[n]}{k}\colon B\text{ is ballot}\}$.
Note that $\mc{B}_{n,k}\neq\emptyset$ implies $0 \leq k \leq \lfloor
n/2\rfloor$.
Let $\mc{S}_{n,k}=\{[n]\setminus B\colon B\in\mc{B}_{n,k}\}$. Let
$\mc{B}_{n,\leq k}=\bigsqcup_{i=0}^k\mc{B}_{n,i}$ and similarly for
$\mc{S}_{n, \leq k}$.
\end{definition}

\begin{proposition}\label{prop:ballot-counting}
For $n\ge 1$ and $0 \leq k \leq \lfloor n/2\rfloor$, we have
$|\mc{S}_{n,k}|=|\mc{B}_{n,k}|=\binom{n}{k}-\binom{n}{k-1}$.
\end{proposition}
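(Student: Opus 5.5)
The first equality $|\mc{S}_{n,k}|=|\mc{B}_{n,k}|$ is immediate, since complementation $B\mapsto[n]\setminus B$ is a bijection $\mc{B}_{n,k}\to\mc{S}_{n,k}$. So the work is to count ballot sets. I will encode a $k$-subset $B\subseteq[n]$ as a word $w_1\cdots w_n$ in two letters: $w_j=\mathrm{b}$ if $j\in B$ and $w_j=\mathrm{a}$ otherwise. Read this as an election count where $\mathrm{a}$ and $\mathrm{b}$ are votes for two candidates, so that the name ``ballot'' is explained.

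The first step is to translate the ballot condition. Write $B=\{x_1<\cdots<x_k\}$. Among $1,\dots,x_i$ there are exactly $i$ letters $\mathrm{b}$ and $x_i-i$ letters $\mathrm{a}$. Hence $x_i\ge 2i$ says exactly that, at the moment the $i$th $\mathrm{b}$ is cast, the $\mathrm{a}$'s are at least as many as the $\mathrm{b}$'s. The count of $\mathrm{b}$'s minus $\mathrm{a}$'s only increases at $\mathrm{b}$-positions. Therefore this condition for all $i$ is equivalent to every prefix of the word having at least as many $\mathrm{a}$'s as $\mathrm{b}$'s. So $\mc{B}_{n,k}$ is in bijection with words of $n-k$ letters $\mathrm{a}$ and $k$ letters $\mathrm{b}$ in which every prefix satisfies $\#\mathrm{a}\ge\#\mathrm{b}$, that is, weak ballot sequences.

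The second step counts these by the reflection principle, i.e.\ by counting the complement. Call a word with $k$ letters $\mathrm{b}$ bad if some prefix has $\#\mathrm{b}=\#\mathrm{a}+1$, and let $j$ be the first such position. Swap every letter in positions $1,\dots,j$, exchanging $\mathrm{a}$ and $\mathrm{b}$. The prefix of length $j$ had $\#\mathrm{b}-\#\mathrm{a}=1$, and after the swap it has $-1$. The resulting word therefore has $k-1$ letters $\mathrm{b}$ and $n-k+1$ letters $\mathrm{a}$.

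Conversely, take any word with $k-1$ letters $\mathrm{b}$. Its final prefix has $\#\mathrm{a}-\#\mathrm{b}=n-2k+2\ge 2>0$, using $k\le\lfloor n/2\rfloor$. So it has a first prefix where $\#\mathrm{a}=\#\mathrm{b}+1$, and swapping that prefix inverts the map. This gives a bijection between the bad words and all $\binom{n}{k-1}$ words with $k-1$ letters $\mathrm{b}$. Hence $|\mc{B}_{n,k}|=\binom{n}{k}-\binom{n}{k-1}$. For $k=0$, the convention $\binom{n}{-1}=0$ gives $1$, matching the empty set.

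The main point to check carefully is that the two first-hitting maps are mutually inverse. This holds because the swap preserves the position $j$ as the first time the imbalance reaches $\pm1$: before $j$ the prefixes were on the other side, and after the swap they are mirrored. The existence of the hitting time in the reverse direction is exactly where $k\le n/2$ is needed. As a fallback, the identity can also be proved by induction on $n$, conditioning on whether $n\in B$. This gives $|\mc{B}_{n,k}|=|\mc{B}_{n-1,k}|+|\mc{B}_{n-1,k-1}|$, provided the condition $x_k=n\ge 2k$ holds automatically. Pascal's rule then finishes the argument, with the boundary case $k=n/2$, where $\binom{n-1}{k}-\binom{n-1}{k-1}=0$, handled separately.
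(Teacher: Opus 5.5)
Your proof is correct and follows the route the paper has in mind. The paper's proof only cites the diagonal reflection argument from Bertrand's ballot theorem with ties allowed, and you supply its details: the translation of $x_i \geq 2i$ into the weak prefix condition, and the first-passage reflection bijection between bad words and all words with $k-1$ letters $\mathrm{b}$, where $k \leq \lfloor n/2 \rfloor$ guarantees the reverse hitting time exists. Your Pascal-rule fallback, including the boundary case $k = n/2$, is also valid.
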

\begin{proof}
This is a well-known consequence of the diagonal reflection argument,
used in Bertrand's ballot theorem with ties allowed.
\end{proof}

The following result will complete the proof of Theorem~\ref{T1}.

\begin{theorem}\label{thm:main}
Given $n\ge 1$, consider all ballot sets $B_1, \ldots,
B_N\subseteq[n]=\{1,\ldots,n\}$ in increasing order of size, with ties
broken arbitrarily. Let $S_i = [n]\setminus B_i$ for all $1 \leq i \leq
N$. For $0 \leq k \leq n$, consider the matrix $A_{n,k}$ with rows
indexed by $I\in\{S_i\colon 1 \leq i \leq \binom{n}{k}\}$ and columns
indexed by $J\in\binom{[n]}{k}$, where $A_{n,k}(I, J) =
\mbm{1}[I\supseteq J]$. Then $\det A_{n,k}\in\{\pm 1\}$.
\end{theorem}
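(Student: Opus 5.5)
We plan to prove $\det A_{n,k}\in\{\pm1\}$ by induction on $n$, using elementary row operations over $\mathbb{Z}$ to reduce $A_{n,k}$ to a block-triangular matrix whose diagonal blocks are $A_{n-1,k}$ and $A_{n-1,k-1}$. The target identity is
\[
\det A_{n,k}=\pm\det A_{n-1,k}\cdot\det A_{n-1,k-1}.
\]
First we pin down the row set. By Proposition~\ref{prop:ballot-counting}, the number of ballot sets of size $\leq j$ is $\sum_{i\leq j}\bigl(\binom{n}{i}-\binom{n}{i-1}\bigr)=\binom{n}{j}$ for $j\leq\lfloor n/2\rfloor$. Hence the first $\binom{n}{k}$ sets $S_i$ are exactly the complements of all ballot sets of size $\leq \min(k,n-k)$, and the arbitrary tie-breaking never matters. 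Since $\det A_{n,k}$ is insensitive to reordering, this gives a canonical description. The base cases are $n=1$ and $k\in\{0,n\}$, where $A_{n,k}$ is $1\times1$ with entry $1$.

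For the inductive step with $1\leq k\leq\lfloor n/2\rfloor$, we split columns $J$ according to whether $n\in J$, and split ballot sets $B$ according to whether $n\in B$.
\begin{itemize}
\item If $n\notin B$, then $B$ is a ballot subset of $[n-1]$. Its row is $S=T\cup\{n\}$ with $T=[n-1]\setminus B$. Restricted to the columns $J\not\ni n$ it is row $T$ of the $[n-1]$-inclusion matrix at level $k$. Restricted to $J\ni n$ it is row $T$ at level $k-1$, via $J\mapsto J\setminus\{n\}$.
\item If $n\in B$, write $B=B'\cup\{n\}$ with $B'$ ballot in $[n-1]$, $|B'|\leq k-1$. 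Then $n\notin S=[n-1]\setminus B'$, so this row vanishes on the columns $J\ni n$. On the columns $J\not\ni n$ it is the inclusion row of $[n-1]\setminus B'$ at level $k$.
\end{itemize}
We expect the ballot condition $\max B\geq 2|B|$ to make $B'\mapsto B'\cup\{n\}$ a bijection onto the ballot sets containing $n$ of size $\leq k$. It should map from the ballot $B'\subseteq[n-1]$ with $|B'|\leq k-1$, given that $k\leq n/2$; checking this is routine.

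The key elimination goes as follows. For each ballot $B'\subseteq[n-1]$ with $|B'|\leq k-1$, both $[n-1]\setminus B'$ and $([n-1]\setminus B')\cup\{n\}$ index rows. Subtracting the former from the latter produces a row that is $0$ on the columns $J\not\ni n$ and equals row $[n-1]\setminus B'$ of $A_{n-1,k-1}$ on the columns $J\ni n$. Since these $B'$ are exactly the ballot sets of $[n-1]$ of size $\leq k-1$, the modified rows form precisely $A_{n-1,k-1}$, which is unimodular by induction.

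Using these rows, we clear the $(J\ni n)$-part of the remaining rows $T\cup\{n\}$ with $|B|=k$; these operations do not touch the left part. The untouched left block then consists of the inclusion rows $[n-1]\setminus B$ for all ballot $B\subseteq[n-1]$ with $|B|\leq k$, which is $A_{n-1,k}$. This yields the target identity.

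The case $k>n/2$ should follow from the symmetric version of the same computation, with rows indexed by ballot sets of size $\leq n-k$ and the roles of $k$ and $n-k$ adjusted, or alternatively by a complementation argument.

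The main obstacle I expect is the bookkeeping of size ranges in this splitting. For the induction to close, the row sets of $A_{n-1,k}$ and $A_{n-1,k-1}$ must be exactly the sets described in the first paragraph, with the sizes capped correctly by $\min(k,n-1-k)$. This needs checking in the boundary cases $k=\lfloor n/2\rfloor$ (when $n$ is odd versus even) and $k>n/2$. In particular, when $k=(n-1)/2+1$ is no longer $\leq\lfloor(n-1)/2\rfloor$, we must make sure the "$|B|=k$" rows are correctly accounted for. I would first verify these counting identities via Proposition~\ref{prop:ballot-counting}, and then settle the $k>n/2$ regime by running the mirrored elimination.
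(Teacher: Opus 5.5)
Your inductive step for $1\le k\le n/2$ is correct. Write $A_{n,k}(S,J)=\mbm{1}[B\cap J=\emptyset]$. Then:
\begin{itemize}
\item The map $B'\mapsto B'\cup\{n\}$ is a bijection onto the ballot sets containing $n$, since $|B'|+1\le k\le n/2$.
\item The paired differences vanish on the columns $J\not\ni n$ and form $A_{n-1,k-1}$ on the columns $J\ni n$.
\item The remaining rows, restricted to $J\not\ni n$, form $A_{n-1,k}$.
\end{itemize}
The boundary case $n=2k$ that worried you is harmless. The set $[2k-1]$ has no ballot $k$-sets, so those rows are the complements of ballot sets of size $\le k-1=(n-1)-k$, which is exactly how $A_{2k-1,k}$ is defined. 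Also, you do not need to clear the right part of the $|B|=k$ rows: after the subtractions the matrix is already block triangular.

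The gap is the case $k>n/2$, and it is load-bearing. It is half of the statement, and your step at $n=2k$ uses $A_{2k-1,k}$, which lies in that regime. So the induction must run jointly over all $0\le k\le n$.

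Neither of your two suggestions closes this case as stated.
\begin{itemize}
\item Complementing columns turns $A_{n,k}$ into the \emph{inclusion} matrix $\mbm{1}[B\subseteq K]$ with $|K|=n-k$, which is the paper's $C_{n,n-k}$. It does not give the disjointness matrix $A_{n,n-k}\simeq D_{n,n-k}$, and relating these two is precisely the content of Lemma~\ref{lem:ballot-reflection}.
\item Repeating the elimination gives $\binom{n-1}{n-k-1}$ rows supported on the $\binom{n-1}{n-k}$ columns $J\ni n$, which is never a square block.
\end{itemize}

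What works is simpler. Set $m=n-k$.
\begin{itemize}
\item The rows with $n\in B$ already vanish on $J\ni n$, and there are $\binom{n-1}{m-1}=\binom{n-1}{k}$ of them. Restricted to $J\not\ni n$ they form $A_{n-1,k}$, whose rows are the complements of ballot sets of size $\le m-1=(n-1)-k$.
\item The rows with $n\notin B$, restricted to $J\ni n$, form $A_{n-1,k-1}$, whose rows are the complements of ballot sets of size $\le m=\min(k-1,n-k)$.
\end{itemize}
So $A_{n,k}$ is block triangular with no elimination at all. In both regimes you then get $|\det A_{n,k}|=|\det A_{n-1,k}|\cdot|\det A_{n-1,k-1}|$ for $1\le k\le n-1$, with base cases $k\in\{0,n\}$.

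With this supplied, your argument is a genuinely different route from the paper's. The paper passes to inclusion matrices in three steps:
\begin{itemize}
\item It factors $D_{n,k}=T_{n,k}C_{n,k}$ using the signed M\"obius matrix, which relies on ballot sets being closed under subsets.
\item It splits $C_{n,k}$ triangularly in Lemma~\ref{lem:ballot-triangle}; this is essentially your $k>n/2$ computation, up to complementing columns.
\item It needs the extra Lemma~\ref{lem:ballot-complement}, because at $n=2k$ the block $C_{2k-1,k}$ falls outside the range of its induction.
\end{itemize}
You stay in the disjointness form throughout. A single layer of row subtractions pairing $B'$ with $B'\cup\{n\}$ replaces the full M\"obius inversion, and the $n=2k$ boundary is absorbed by the joint induction over $k$. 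The paper's route isolates an explicit unimodular factorization $D=TC$. Yours is more uniform and gives a Pascal-type determinant recursion for the matrices $A_{n,k}$ themselves.
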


Note that by Proposition~\ref{prop:ballot-counting} we have
$N=\binom{n}{\lfloor n/2\rfloor}$, so the matrices $A_{n,k}$ are in fact
well-defined. Additionally, Proposition~\ref{prop:ballot-counting}
implies that the rows of $A_{n,k}$ are in fact indexed by $\mc{S}_{n,
\leq k}$ when $k \leq \lfloor n/2\rfloor$, and by $\mc{S}_{n, \leq n-k}$
when $k > n/2$.

\subsection{Ballot matrices, triangularity, and unimodular transforms}

We now define the key matrices that we will inductively prove are
unimodular.

\begin{definition}[Ballot matrices]
For integers $k,n$ with $0 \leq k \leq \lceil n/2 \rceil$, we define
\emph{ballot matrices} $C_{n,k}, D_{n,k}$ with rows indexed by
$\mc{B}_{n, \leq k}$ and columns indexed by $\binom{[n]}{k}$ with
\[
C_{n,k}(I, J) = \mbm{1}[I\subseteq J],\quad D_{n,k}(I, J) = \mbm{1}[I\cap
J = \emptyset.]
\]
\end{definition}

Note that by replacing $I$ in the definition of $A_{n,k}$ by
$[n] \setminus I$, and by replacing $J$ by $[n] \setminus J$ in the case
$k>n/2$, we have
\[
A_{n,k}\simeq\begin{cases}D_{n,k}&\text{for } k \leq n/2,\\
C_{n,n-k}&\text{for }k>n/2,\end{cases}
\]
and also $C_{n,k}, D_{n,k}$ are square for $0 \leq k \leq n/2$.

Therefore, to show Theorem~\ref{thm:main} it suffices to prove:

\begin{proposition}[Ballot matrices are unimodular]\label{prop:ballot-unimodular}
If $0 \leq k \leq n/2$, then $\det C_{n,k},\det D_{n,k}\in\{\pm1\}$.
\end{proposition}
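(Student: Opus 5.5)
The plan is to prove the statement for $C_{n,k}$ by induction on $n$, and to get $D_{n,k}$ from $C_{n,k}$ by a unitriangular change of rows. Throughout I use the fact that ballot sets are closed under taking subsets. Indeed, if $B=\{x_1<\cdots<x_k\}$ is ballot and we delete one element, then each remaining element moves to an index $i'\le i$, and $x_i\ge 2i\ge 2i'$.

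\textbf{Step 1 ($D$ from $C$).} For finite sets, inclusion--exclusion gives $\mbm{1}[I\cap J=\emptyset]=\sum_{K\subseteq I}(-1)^{|K|}\mbm{1}[K\subseteq J]$. Since $\mc{B}_{n,\le k}$ is downward closed, every such $K$ again indexes a row. Hence $D_{n,k}=M\,C_{n,k}$, where $M(I,K)=(-1)^{|K|}\mbm{1}[K\subseteq I]$ for $I,K\in\mc{B}_{n,\le k}$. If we order the rows by size, $M$ is triangular with diagonal entries $\pm1$, so $\det M=\pm1$. This identity holds for every $0\le k\le\lceil n/2\rceil$, including non-square cases with the same row set. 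Consequently it suffices to show that $\det C_{n,k}=\pm1$ for $0\le k\le n/2$.

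\textbf{Step 2 (recursion for $C$).} Let $k\le n/2$. A set $B$ containing $n$ with $|B|\le k$ is ballot iff $B\setminus\{n\}$ is ballot, because its last element $n$ satisfies $n\ge 2k\ge 2|B|$. Therefore
\[
\mc{B}_{n,\le k}=\mc{B}_{n-1,\le k}\sqcup\{B'\cup\{n\}: B'\in\mc{B}_{n-1,\le k-1}\}.
\]
Split the columns $J\in\binom{[n]}{k}$ according to whether $n\in J$. Rows containing $n$ vanish on the columns with $n\notin J$. On the columns with $n\in J$, these rows give exactly $C_{n-1,k-1}$, since $I'\cup\{n\}\subseteq J'\cup\{n\}$ iff $I'\subseteq J'$. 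Rows not containing $n$ give $C_{n-1,k}$ on the columns with $n\notin J$. Thus
\[
C_{n,k}\simeq\begin{pmatrix} C_{n-1,k} & * \\ 0 & C_{n-1,k-1}\end{pmatrix}.
\]
Both diagonal blocks are square: the lower one because $k-1\le (n-1)/2$, and the upper one by the counting in Proposition~\ref{prop:ballot-counting}. Hence $\det C_{n,k}=\pm\det C_{n-1,k}\det C_{n-1,k-1}$. The base cases are $k=0$, where the matrix is the $1\times1$ matrix $(1)$, and small $n$.

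\textbf{Step 3 (the boundary case $n=2k$, the main obstacle).} Here the block $C_{2k-1,k}$ has $k=\lceil (n-1)/2\rceil>(n-1)/2$. This lies outside the inductive hypothesis, although it is still a square matrix: $\mc{B}_{2k-1,k}=\emptyset$, so its rows are $\mc{B}_{2k-1,\le k-1}$, and there are $\binom{2k-1}{k-1}=\binom{2k-1}{k}$ of them.

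To handle it, complement the columns, $J\mapsto[2k-1]\setminus J$. Then $I\subseteq J$ becomes $I\cap J^c=\emptyset$, where $J^c$ ranges over $\binom{[2k-1]}{k-1}$. This gives $C_{2k-1,k}\simeq D_{2k-1,k-1}$. By Step 1, $D_{2k-1,k-1}=M\,C_{2k-1,k-1}$, and $k-1\le (2k-1)/2$, so the inductive hypothesis applies and this factor is unimodular.

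So I would run the induction on $n$ for the statement ``$\det C_{n,k}=\pm1$ for all $0\le k\le n/2$''. In each step I use Step 2, and Step 3 whenever $n$ is even and $k=n/2$. The $D$ statement then follows from Step 1. The only delicate points are checking that the block decomposition is genuinely square in each case and carrying out the boundary case correctly; everything else is bookkeeping.
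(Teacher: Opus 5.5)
Your proposal is correct and follows essentially the same route as the paper. It uses the inclusion--exclusion factorization $D_{n,k}=T_{n,k}C_{n,k}$ (Lemma~\ref{lem:ballot-reflection}), the block-triangular splitting according to whether $n$ lies in the row and column indices (Lemma~\ref{lem:ballot-triangle}), and column complementation $C_{2k-1,k}\simeq D_{2k-1,k-1}$ in the boundary case $n=2k$ (Lemma~\ref{lem:ballot-complement}), all combined by induction on $n$. Your explicit checks fill in details the paper leaves implicit: that ballot sets are closed under subsets, that $n\ge 2k$ makes $B\mapsto B\setminus\{n\}$ a bijection onto $\mc{B}_{n-1,\le k-1}$, and that both diagonal blocks are square.
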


The two key lemmata used in the proof are the following:

\begin{lemma}[Unimodular reflection of ballot matrices]\label{lem:ballot-reflection}
For $0\leq k \leq n/2$, there is a matrix $T_{n,k}$ with $\det
T_{n,k}\in\{\pm 1\}$ such that $D_{n,k}=T_{n,k}C_{n,k}$. In fact, given
$I,I'\in\mc{B}_{n, \leq k}$ we may write
\[
T_{n,k}(I, I') = (-1)^{|I'|}\mbm{1}[I'\subseteq I].
\]
\end{lemma}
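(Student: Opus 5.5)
We prove the lemma by directly checking the matrix identity $D_{n,k} = T_{n,k} C_{n,k}$ with $T_{n,k}$ as displayed. We then observe that $T_{n,k}$ is triangular with respect to inclusion, which gives its determinant.

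\emph{The identity.} Fix $I \in \mc{B}_{n,\leq k}$ and $J \in \binom{[n]}{k}$. The $(I,J)$ entry of $T_{n,k} C_{n,k}$ is
\[
\sum_{I' \in \mc{B}_{n,\leq k}} (-1)^{|I'|} \mbm{1}[I' \subseteq I]\, \mbm{1}[I' \subseteq J] = \sum_{\substack{I' \subseteq I \cap J \\ I' \in \mc{B}_{n, \leq k}}} (-1)^{|I'|}.
\]
The key observation is that every subset of a ballot set is again ballot. Indeed, if $B = \{x_1 < \cdots < x_m\}$ satisfies $x_i \ge 2i$, and $B' = \{x_{i_1} < \cdots < x_{i_r}\}$ with $i_1 < \cdots < i_r$, then $i_j \ge j$, so $x_{i_j} \ge 2 i_j \ge 2j$. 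Also every subset of $I$ has size at most $|I| \leq k$.

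Hence the constraint $I' \in \mc{B}_{n,\leq k}$ is automatic, and the sum runs over all subsets $I'$ of $I \cap J$. It therefore equals $\sum_{I' \subseteq I\cap J} (-1)^{|I'|} = \mbm{1}[I \cap J = \emptyset]$ by the binomial theorem, $(1-1)^{|I\cap J|}$. This is exactly $D_{n,k}(I,J)$.

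\emph{The determinant.} Order the rows and columns of $T_{n,k}$ (both indexed by $\mc{B}_{n,\leq k}$) by increasing size, with ties broken arbitrarily. If $T_{n,k}(I,I') \neq 0$ then $I' \subseteq I$, so either $I' = I$ or $|I'| < |I|$. Thus $T_{n,k}$ is lower triangular, with diagonal entries $(-1)^{|I|}$. Consequently $\det T_{n,k} = \prod_{I} (-1)^{|I|} \in \{\pm 1\}$.

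The only nontrivial point is the hereditary property of ballot sets, which makes the inclusion–exclusion sum complete. No real obstacle is expected beyond verifying it as above. Note that squareness of $C_{n,k}, D_{n,k}$ is not needed here; only $T_{n,k}$ must be square, which it is by construction.
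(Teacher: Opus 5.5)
Your proof is correct and follows the paper's argument: you expand the $(I,J)$ entry of $T_{n,k}C_{n,k}$, use that $\mc{B}_{n,\leq k}$ is closed under taking subsets to make the sum a full alternating sum over subsets of $I\cap J$ (hence equal to $\mbox{}$the indicator of $I\cap J=\emptyset$ by inclusion--exclusion), and then note that $T_{n,k}$ is triangular with diagonal entries $\pm 1$ once the axes are sorted by size. Beyond the paper, you also check the hereditary property of ballot sets, which the paper only calls clear, and you describe $T_{n,k}$ as lower triangular, which is the accurate statement for rows indexed by $I$ and columns by $I'$ (the paper's ``upper'' is a harmless slip).
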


\begin{proof}
For all $I\in\mc{B}_{n, \leq k}$ and $J\in\binom{[n]}{k}$ we have:
\begin{align*}
& \sum_{I'\in\mc{B}_{n, \leq k}} (-1)^{|I'|}\mbm{1}[I'\subseteq
I]\cdot\mbm{1}[I'\subseteq J]\\
&= \sum_{\substack{I'\in\mc{B}_{n, \leq k}\\I'\subseteq I, \,
I' \subseteq J}} (-1)^{|I'|}\mbm{1}[I'\subseteq I] \cdot
\mbm{1}[I'\subseteq J]\\
&= \sum_{I'\subseteq I \cap J}(-1)^{|I'|}\mbm{1}[I'\subseteq
I]\cdot\mbm{1}[I'\subseteq J] = \sum_{I'\subseteq
I \cap J}(-1)^{|I'|}\mbm{1}[I'\subseteq I\cap J]\\
&=\mbm{1}[I\cap J = \emptyset].
\end{align*}

The first and third equalities are clear. The second is due to the fact
that $\mc{B}_{n,\leq k}$ is clearly closed under taking subsets. The
fourth is by the principle of inclusion-exclusion.

If we sort the axes in increasing order of size of $I'$ and $I$, with
ties broken arbitrarily, then the matrix $T_{n,k}$ is upper triangular
with $\pm1$ on the diagonal. So $T_{n,k}$ is unimodular.
\end{proof}

\begin{lemma}[Triangular decomposition of ballot matrices]\label{lem:ballot-triangle}
For $1\leq k\leq n/2$, we may reorder the axes of $C_{n,k}$ such that it
is block lower triangular, where the $2$ diagonal blocks are
$C_{n-1,k-1}$ and $C_{n-1,k}$.
\end{lemma}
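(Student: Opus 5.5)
The plan is to split both index sets of $C_{n,k}$ according to whether they contain the largest element $n$, and to check that the block with rows containing $n$ and columns avoiding $n$ vanishes.

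\emph{Columns.} Partition $\binom{[n]}{k}$ into $\{J : n \in J\}$ and $\{J : n \notin J\}$. The map $J \mapsto J \setminus \{n\}$ identifies the first part with $\binom{[n-1]}{k-1}$, and the second part is literally $\binom{[n-1]}{k}$.

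\emph{Rows.} Partition $\mc{B}_{n,\leq k}$ into $\{I : n \in I\}$ and $\{I : n \notin I\}$.
\begin{itemize}
\item A set $I \subseteq [n-1]$ is ballot as a subset of $[n]$ if and only if it is ballot as a subset of $[n-1]$, since the condition $x_i \ge 2i$ does not refer to $n$. So the second part is exactly $\mc{B}_{n-1,\leq k}$.
\item For the first part, write $I = I' \sqcup \{n\}$ with $|I'| \leq k-1$. The element $n$ is the largest element of $I$, in position $|I|$. Since $|I| \leq k \leq n/2$, the inequality $n \ge 2|I|$ holds automatically. Hence $I$ is ballot if and only if $I'$ is ballot in $[n-1]$, and the first part is in bijection with $\mc{B}_{n-1,\leq k-1}$.
\end{itemize}

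\emph{The blocks.} Order the rows with $n \in I$ first and the columns with $n \in J$ first.
\begin{itemize}
\item Rows with $n \in I$ against columns with $n \notin J$: the entry $\mbm{1}[I \subseteq J]$ is zero, because $n \in I \setminus J$. This is the upper-right zero block.
\item Rows with $n \in I$ against columns with $n \in J$: with $I = I' \cup \{n\}$ and $J = J' \cup \{n\}$ we get $\mbm{1}[I \subseteq J] = \mbm{1}[I' \subseteq J']$. This block is $C_{n-1,k-1}$.
\item Rows with $n \notin I$ against columns with $n \notin J$: the entries are $\mbm{1}[I \subseteq J]$ with $I \in \mc{B}_{n-1,\leq k}$ and $J \in \binom{[n-1]}{k}$. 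This block is $C_{n-1,k}$.
\item The lower-left block, rows with $n \notin I$ against columns with $n \in J$, is arbitrary.
\end{itemize}
So $C_{n,k}$ is block lower triangular with diagonal blocks $C_{n-1,k-1}$ and $C_{n-1,k}$.

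\emph{Well-definedness of the diagonal blocks.} This is the only delicate point, and it matters because a block-triangular reordering only gives a useful determinant factorization when both diagonal blocks are square.
\begin{itemize}
\item Definition of ballot matrices: it requires $k \leq \lceil (n-1)/2 \rceil$ for $C_{n-1,k}$ and $k-1 \leq \lceil (n-1)/2 \rceil$ for $C_{n-1,k-1}$. Both hold since $k \leq n/2$. In the boundary case $n = 2k$, we have $\lceil (2k-1)/2 \rceil = k$.
\item $C_{n-1,k-1}$ is square: since $k-1 \leq (n-1)/2$, Proposition~\ref{prop:ballot-counting} with telescoping gives $|\mc{B}_{n-1,\leq k-1}| = \binom{n-1}{k-1}$, which is its number of columns.
\item $C_{n-1,k}$ is square when $k \leq (n-1)/2$: likewise $|\mc{B}_{n-1,\leq k}| = \binom{n-1}{k}$.
\item $C_{n-1,k}$ is square when $n = 2k$: here $\mc{B}_{2k-1,k} = \emptyset$, so there are $\binom{2k-1}{k-1}$ rows. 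This equals $\binom{2k-1}{k}$, the number of columns.
\end{itemize}
The total counts are also consistent, since $\binom{n-1}{k-1} + \binom{n-1}{k} = \binom{n}{k}$.

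I expect no real obstacle beyond this bookkeeping: the main thing to verify carefully is that adjoining $n$ preserves ballotness precisely because $k \leq n/2$, together with the $n = 2k$ edge case above.
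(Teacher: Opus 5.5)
Your proposal is correct and is essentially the paper's proof: order rows and columns according to whether they contain $n$, note that the upper-right block vanishes because $n \in I \setminus J$, and identify the diagonal blocks by deleting $n$. You also make explicit two points the paper only gestures at (``respects the necessary ballot and containment properties''): that $I' \cup \{n\}$ is ballot exactly when $I'$ is, since $2|I| \le 2k \le n$, and that both diagonal blocks are square, including when $n = 2k$. The squareness check is what lets the determinant factor in the proof of Proposition~\ref{prop:ballot-unimodular}.
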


\begin{proof}
Recall that rows of $C_{n,k}$ are indexed by $I\in\mc{B}_{n,\leq k}$, and
columns by $J\in\binom{[n]}{k}$. We put the rows where $I\ni n$ at the
top and $I\not\ni n$ at the bottom, and put the columns where $J\ni n$ at
the left and $J\not\ni n$ at the right, forming $2\times 2 = 4$ blocks.
Since $C_{n, k}(I, J) = \mbm{1}[I\subseteq J]$, the block lower
triangular structure is clear.

The lower right block with $n\notin I\cup J$ clearly corresponds to
$C_{n-1,\leq k}$. The upper left block with $n\in I\cap J$ corresponds to
$C_{n-1,\leq k-1}$ by deleting $n$, which respects the necessary ballot
and containment properties. 
\end{proof}

Note that in the case $n=2k$, the matrix $C_{n-1,\leq k}$ does not satisfy
$k\leq (n-1)/2$, so we must be careful with respect to this case when using
induction. Our final lemma lets us handle this case:
\begin{lemma}\label{lem:ballot-complement}
For $k\ge 1$, we have $C_{2k-1,k}\simeq D_{2k-1,k-1}$.
\end{lemma}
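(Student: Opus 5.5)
The plan is to match the rows of the two matrices by the identity map and the columns by complementation in $[2k-1]$; the lemma is then immediate. Set $n = 2k-1$. Since $k > n/2$, the matrix $C_{2k-1,k}$ is not one of the square ballot matrices covered by the earlier remarks, so its row and column index sets have to be identified by hand.

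\emph{Rows.} By definition, the rows of $C_{2k-1,k}$ are indexed by $\mc{B}_{2k-1,\leq k}$. By Definition~\ref{def:ballot}, $\mc{B}_{n,j}\neq\emptyset$ forces $j\leq\lfloor n/2\rfloor = k-1$. Indeed, a ballot set of size $k$ would need $x_k\ge 2k > 2k-1$. Hence $\mc{B}_{2k-1,k}=\emptyset$ and
\[
\mc{B}_{2k-1,\leq k}=\mc{B}_{2k-1,\leq k-1},
\]
which is exactly the row index set of $D_{2k-1,k-1}$. So the rows can be matched by the identity map.

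\emph{Columns.} The columns of $C_{2k-1,k}$ are indexed by $\binom{[2k-1]}{k}$, and those of $D_{2k-1,k-1}$ by $\binom{[2k-1]}{k-1}$. Complementation $J\mapsto J' := [2k-1]\setminus J$ is a bijection between these two sets. For every row index $I$ we have
\[
I\subseteq J \iff I\cap J' = \emptyset,
\]
so $C_{2k-1,k}(I,J)=\mbm{1}[I\subseteq J]=\mbm{1}[I\cap J'=\emptyset]=D_{2k-1,k-1}(I,J')$.

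Thus, after reindexing the columns of $C_{2k-1,k}$ by complements, the two matrices coincide entrywise, and so $C_{2k-1,k}\simeq D_{2k-1,k-1}$. As a consistency check, both matrices are square. The row count is $\sum_{j\le k-1}\bigl(\binom{2k-1}{j}-\binom{2k-1}{j-1}\bigr)=\binom{2k-1}{k-1}=\binom{2k-1}{k}$, by telescoping Proposition~\ref{prop:ballot-counting}. The only step needing care is the vanishing of $\mc{B}_{2k-1,k}$; there is no real obstacle.
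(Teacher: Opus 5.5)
Your proposal is correct and follows essentially the same route as the paper: you use $\mc{B}_{2k-1,k}=\emptyset$ to identify the row index sets, then reindex the columns by complementation so that $I\subseteq J$ becomes $I\cap J'=\emptyset$. Your added check that both matrices are square (via telescoping Proposition~\ref{prop:ballot-counting}) is a harmless extra that the paper omits.
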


\begin{proof}
First, note that $\mc{B}_{2k-1,k} = \emptyset$ hence $\mc{B}_{2k-1, \leq
k} = \mc{B}_{2k-1, \leq k-1}$. Second, replace the columns indexed by
$J\in\binom{[2k-1]}{k}$ by $J'=[2k-1]\setminus J$. For $I\in\mc{B}_{2k-1,
\leq k}$ this transforms the condition $I\subseteq J$ into the condition
$I\cap J'=\emptyset$. We are done.
\end{proof}

Now the finish is easy.

\begin{proof}[Proof of Proposition~\ref{prop:ballot-unimodular}]
Let us first consider the case $k=0$. Note that $C_{n,0}$ and $D_{n,0}$
are $1\times 1$ matrices with element $1$, so it is clear in this case.
Also, note that $n=1$ follows since the only possible $k$ is $k=0$.

Now, perform induction on $n$ and assume $n\ge 2$. If $k=0$ we are done,
hence assume $k\ge 1$. Reduce unimodularity of $D_{n,k}$ to that of
$C_{n,k}$ by Lemma~\ref{lem:ballot-reflection}. Reduce unimodularity of
$C_{n,k}$ to that of $C_{n-1,k-1}$ and $C_{n-1,k}$ by
Lemma~\ref{lem:ballot-triangle}.

We always have $n-1\ge 2(k-1)$, so we may apply induction for the matrix
$C_{n-1,k-1}$. When $n-1\ge 2k$, we may apply it for $C_{n-1,k}$ to
finish. Otherwise, $n = 2k$. In this case we apply
Lemma~\ref{lem:ballot-complement} to reduce unimodularity of
$C_{n-1,k}=C_{2k-1,k}$ to that of $D_{2k-1,k-1}$, which follows from
induction. We are done.
\end{proof}

\begin{remark}
After writing this paper, the first-named author was reminded by Arvind
Ayyer that they had discussed the question in~2023! And Ayyer had
conjectured that the following collection of sets works:
\begin{equation}\label{Earvind}
I = [n] \setminus \{ a_1 < \cdots < a_k \}, \quad \text{satisfying:}
\quad 1 \leq a_i \leq n - 2k - 1 + 2i\ \forall i \in [k],
\end{equation}
where $k = 0, 1, 2, \dots$.
Using ballot sets, we now explain why the family~\eqref{Earvind} indeed
works. First note that one can take the complements of other sets than
ballot sets to obtain $N$-many directional vectors -- say applying a
common permutation $w$ to all ballot subsets. Apply the longest
permutation (aka type $A$ Weyl group word) $w_\circ := (i \leftrightarrow
n+1-i)$ to the above family, to get $a'_i := n+1 - a_i$ satisfying the
conditions
\[
n \ge a'_1 > \cdots > a'_k \ge 1, \quad a'_i \ge 2k-2i+2\ \forall i \in
[k].
\]
We next write these elements in reverse order as $b_i := a'_{k+1-i}$, and
have thus transformed the conjectural family of subsets into:
\[
[n] \setminus \{ b_1 < \cdots < b_k \}, \quad \text{satisfying:}
\quad b_i \ge 2i\ \forall i \in [k].
\]
But these are precisely the complements of ballot sets, and so the $I$
in~\eqref{Earvind} also work.
\qed
\end{remark}

\section{Proofs of Theorems~\ref{T2} and~\ref{T3} over rings in
$\mathfrak{R}$}\label{SthmB}

Section~\ref{Sballot} concludes the proof of Theorem~\ref{thm:main} and
so of Theorem~\ref{T1}. We now turn to Theorem~\ref{T2}; before proving
it, we explain the motivation in working over not one field but a product
of them. This is because even over one field, the proof of
Theorem~\ref{T2} crucially uses the fact that for a prime power $q \leq
n$, the power function $r \mapsto r^q$ is the identity on $\F_q$, yet has
exponent in $[2,n]$. It is now natural to ask for which (other)
commutative unital rings $R$ does such an exponent $>1$ exist.
The following result answers this question for finite rings and provides
the above motivation.

\begin{proposition}\label{Ppower}
Given a finite unital commutative ring $R$, the following are equivalent.
\begin{enumerate}
\item There exists $k > 1$ such that $r^k = r$ for all $r \in R$.

\item $R$ has no (nonzero) nilpotents, i.e., its nilradical is the
trivial ideal $(0)$.

\item $R$ is a direct product of (finite) fields.
\end{enumerate}
\end{proposition}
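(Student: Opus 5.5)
I will prove the cycle of implications (3)$\Rightarrow$(1)$\Rightarrow$(2)$\Rightarrow$(3), using the Artin--Wedderburn / structure theorem for Artinian rings: a finite commutative ring is Artinian, hence $R \cong R_1 \times \cdots \times R_m$ with each $(R_i, \mfm_i)$ a finite local ring whose maximal ideal $\mfm_i$ is nilpotent (the decomposition cited from \cite{AMcD} in the proof of the earlier Lemma). The residue fields $R_i/\mfm_i$ are finite fields $\F_{q_i}$.

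\emph{(3)$\Rightarrow$(1).} If $R = \F_{q_1} \times \cdots \times \F_{q_m}$, then every $x \in \F_{q_i}$ satisfies $x^{q_i} = x$. Iterating gives $x^{1 + t(q_i - 1)} = x$ for all $t \ge 0$. Take $k := 1 + \prod_i (q_i - 1)$, or $k := 1 + \operatorname{lcm}_i(q_i - 1)$; then $k > 1$ since each $q_i \ge 2$. This $k$ works componentwise, so $r^k = r$ for all $r \in R$.

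\emph{(1)$\Rightarrow$(2).} If $r$ is nilpotent, say $r^s = 0$, then $r = r^k = r^{k^2} = \cdots = r^{k^j}$ for every $j$. Choosing $j$ with $k^j \ge s$ gives $r = 0$. This uses only $k > 1$.

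\emph{(2)$\Rightarrow$(3).} Write $R \cong \times_i R_i$ as above. Each $\mfm_i$ is nilpotent, and the elements of $0 \times \cdots \times \mfm_i \times \cdots \times 0$ are nilpotent in $R$. Reducedness therefore forces $\mfm_i = 0$, so each $R_i$ is a field, and a finite one since $R$ is finite.

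The only nontrivial input is the structure theorem in (2)$\Rightarrow$(3), specifically that local Artinian rings have nilpotent maximal ideal. If a self-contained argument were preferred, I would instead note the following. In a finite reduced ring, the nilradical, which is the intersection of all prime ideals, is zero. Every prime ideal is maximal, since $R/\mathfrak p$ is a finite domain and hence a field. There are finitely many of them, so by the Chinese remainder theorem $R \hookrightarrow \times_{\mathfrak p} R/\mathfrak p$ is an isomorphism.
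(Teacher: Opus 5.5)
Your proof is correct and follows essentially the same route as the paper: $k = 1 + {\rm lcm}_i(q_i-1)$ for (3)$\Rightarrow$(1), and the structure theorem for Artinian rings (a reduced local Artinian ring is a field) for (2)$\Rightarrow$(3). The only difference is cosmetic, in (1)$\Rightarrow$(2): you iterate $r = r^{k^j}$ until $k^j$ exceeds the nilpotency index, whereas the paper descends on the index by applying (1) to $s = r^{m-1}$, which satisfies $s^2 = 0$.
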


\begin{proof}
Suppose $r \in R$ and $r^m=0$ for some $m>1$. Then $s := r^{m-1}$
satisfies: $s^2 = 0$, whence $s^k = 0$. By~(1), $s=0$, so $r^{m-1}=0$. If
$m=2$ then we are done, else repeat this argument with $m-1$ in place of
$m$. This shows $(1) \implies (2)$.

If~(2) holds, then $R$ is reduced, as well as Artinian. Hence by the
structure theorem for Artinian rings (see e.g.\
\cite[Theorem~8.7]{AMcD}), it is a direct product of local Artinian rings
with no nilpotents, i.e.\ of fields. This shows~(3).

Finally, suppose the finite ring $R = \times_{i=1}^l \F_i$, with $|\F_i|
=: q_i < \infty$ for all $i$. Let $L := {\rm lcm}(q_1-1, \dots, q_l-1)$.
Then $r^L = 1$ for all $i$ and all $r \in \F_i^\times$. Hence
\[
((r_i)_{i=1}^l)^{L+1} = (r_i^{L+1})_i = (r_i)_i, \qquad \forall 1 \leq i
\leq l, \ r_i \in \F_i
\]
which shows~(1) and concludes the proof.
\end{proof}

Proposition~\ref{Ppower} motivated this section; we remark for
completeness that it holds in greater generality:

\begin{proposition}
Suppose $k > 1$ is an integer and $R = \times_{i \in I} R_i$, where each
$R_i$ is an integral domain or a local ring with nilpotent maximal ideal.
Then $r \mapsto r^k$ is the identity map on $R$ if and only if every
$R_i$ is a finite field satisfying: $|R_i| - 1$ divides $k-1$.
\end{proposition}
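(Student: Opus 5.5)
The plan is to reduce to a single factor via the product structure, and then handle the two possible types of factor separately. Since $R = \times_{i \in I} R_i$ and powers are computed componentwise, $r^k = r$ for all $r \in R$ holds if and only if $r_i^k = r_i$ for all $i$ and all $r_i \in R_i$. So it suffices to prove, for a single ring $R_i$ that is either an integral domain or a local ring with nilpotent maximal ideal, that $x \mapsto x^k$ is the identity on $R_i$ if and only if $R_i$ is a finite field with $(|R_i|-1) \mid (k-1)$. Throughout we work with unital rings, so $1 \neq 0$.

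For the \emph{if} direction, let $R_i$ be a finite field of size $q$ with $(q-1) \mid (k-1)$. Then $x^{q-1} = 1$ for every $x \in R_i^\times$, so $x^{k-1} = 1$ and hence $x^k = x$; also $0^k = 0$. This is exactly the computation in the last step of Proposition~\ref{Ppower}.

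For the \emph{only if} direction, first rule out nilpotents. The argument from $(1)\Rightarrow(2)$ in Proposition~\ref{Ppower} never used finiteness. If $s^2 = 0$, then $s = s^k = s^2 s^{k-2} = 0$ since $k \ge 2$, and descending on the nilpotency index kills every nilpotent. Consequently, in the local case with nilpotent $\mfm$ we get $\mfm = 0$, so $R_i$ is a field. In either case $R_i$ is therefore an integral domain.

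It remains to treat an integral domain $D$ with $x^k = x$ for all $x$. Every nonzero $x$ satisfies $x(x^{k-1} - 1) = 0$, so $x^{k-1} = 1$. Thus each nonzero element is a unit, and $D$ is a field. Moreover every element is a root of $t^k - t$, which has at most $k$ roots in a field, so $D$ is finite, say $|D| = q$. The multiplicative group $D^\times$ is cyclic of order $q-1$, and a generator $g$ satisfies $g^{k-1} = 1$. Hence $(q-1) \mid (k-1)$.

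The main (mild) obstacle is the local non-domain case. Showing reducedness without any finiteness hypothesis is what forces $\mfm = 0$. The rest is standard: cyclicity of the multiplicative group of a finite field, together with the bound on the number of roots of a polynomial over a field.
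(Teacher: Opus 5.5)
Your proof is correct and is essentially the argument the paper has in mind when it says the proof is similar to that of Proposition~\ref{Ppower} and omits it. You reduce componentwise, reuse the nilpotent-descent argument from $(1)\Rightarrow(2)$ to force $\mfm = 0$ in the local case, and then show that a domain with $x^k = x$ is a finite field whose cyclic unit group of order $|R_i|-1$ has exponent dividing $k-1$. Your two extra ingredients beyond Proposition~\ref{Ppower}, the root bound for $t^k - t$ and cyclicity of $R_i^\times$, are exactly what is needed to get the sharp divisibility condition rather than mere existence of some exponent.
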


\noindent The proof is similar to that of Proposition~\ref{Ppower}, hence
omitted.

It remains to show Theorem~\ref{T2}, and that Noetherian rings fall in
our framework. We start with the latter:

\begin{proof}[Proof of Proposition~\ref{Pnoetherian}]
Throughout this proof, $R$ is Noetherian. We quickly recall some standard
facts.
(a)~An associated prime is $\mathfrak{p} \in {\rm Spec}(R)$ such that
$\mathfrak{p} = {\rm Ann}_R(r)$ for some $0 \neq r \in R$.
(b)~$R$ has finitely many associated primes ${\rm Ass}(R)$, and their
union is precisely the zerodivisor set $ZD(R)$.

We now proceed. Recall \cite[Proposition~3.11(iv)]{AMcD} that
prime ideals in $Q(R)$ are precisely $(ZD(R)^c)^{-1} P$ for some prime
ideal avoiding $ZD(R)^c$. Thus if $\mfm' \subset Q(R)$ is maximal, then
$\mfm' = Q(R) \mfm = (ZD(R)^c)^{-1} \mfm$ for some maximal ideal $\mfm
\subseteq ZD(R) = \bigcup_{\mathfrak{p} \in {\rm Ass}(R)} \mathfrak{p}$.
By prime avoidance \cite[Proposition~1.11]{AMcD}, $\mfm$ is in some
associated prime $\mathfrak{p}$ of $R$. Hence by definition (see (a)
above), there exists $0 \neq r \in R$ such that $r \mfm = 0$, whence
$\frac{r}{1} \cdot \mfm' = \frac{r}{1} \mfm Q(R) = 0$. Finally,
$\frac{r}{1} \neq 0$ in $(ZD(R)^c)^{-1} R$ since $r \neq 0$.
\end{proof}

We end by proving Theorem~\ref{T2}. This requires an intermediate
technical step.

\begin{proposition}\label{Pgemini}
Suppose $n \ge 2$ and $R$ is a unital commutative ring satisfying
Definition~\ref{Dkasch}(2): $Q(R)$ is Kasch. Then the following are
equivalent:
\begin{enumerate}
\item $R$ is $n$-Vandermonde full: ${\rm Ann}_R (I_{p_n}) = 0$.

\item For all maximal ideals $\mfm$ of $Q(R)$, we have $|R / (\mfm \cap
R)| \ge n+1$.
\end{enumerate}
Here we note that $\mfm \cap R$ is a prime ideal of $R$.
\end{proposition}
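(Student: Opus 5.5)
We prove both implications through the localization $R \to Q(R)$, which is injective because elements of $ZD(R)^c$ are non-zerodivisors. Write $\mathfrak{p} := \mfm \cap R$. This is prime: it is the contraction of a prime ideal of $Q(R)$, and it lies in $ZD(R)$.

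\emph{(2) $\Rightarrow$ (1).} Let $0 \neq a \in R$; we must find ${\bf r} \in R^n$ with $a\, p_n({\bf r}) \neq 0$. View $a$ in $Q(R)$ and take its annihilator there. It is a proper ideal, since $a \neq 0$, so it lies in some maximal ideal $\mfm$ of $Q(R)$. By (2), the domain $R/\mathfrak{p}$ has at least $n+1$ elements, so we can pick $r_1, \dots, r_n \in R$ whose images in $R/\mathfrak{p}$ are nonzero and pairwise distinct. Then $p_n({\bf r}) \notin \mathfrak{p}$, because $p_n({\bf r})$ is a product of factors $r_j$ and $r_j - r_k$, none lying in the prime $\mathfrak{p}$. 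Hence $p_n({\bf r})/1 \notin \mfm$, so $p_n({\bf r})/1$ does not annihilate $a$ in $Q(R)$. Since $R \hookrightarrow Q(R)$, we get $a\,p_n({\bf r}) \neq 0$ in $R$. This direction does not use the Kasch hypothesis.

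\emph{(1) $\Rightarrow$ (2).} We argue by contrapositive. Suppose $\mfm$ is a maximal ideal of $Q(R)$ with $|R/\mathfrak{p}| \leq n$. Since $Q(R)$ is Kasch, some $0 \neq x \in Q(R)$ satisfies $x\mfm = 0$. Clearing the denominator by a non-zerodivisor, we may take $x = a/1$ with $0 \neq a \in R$, and then $a\mathfrak{p} = 0$ in $R$, again using injectivity. Now let ${\bf r} \in R^n$ be arbitrary. The $n+1$ elements $0, r_1, \dots, r_n$ lie in $R/\mathfrak{p}$, which has at most $n$ elements, so by pigeonhole two of them coincide mod $\mathfrak{p}$. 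Either $r_j \in \mathfrak{p}$ for some $j$, or $r_j - r_k \in \mathfrak{p}$ for some $j<k$. In both cases $p_n({\bf r}) \in \mathfrak{p}$, so $a\,p_n({\bf r}) = 0$. Thus $0 \neq a \in {\rm Ann}_R(I_{p_n})$, contradicting (1).

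The only delicate points are the bookkeeping facts about $Q(R)$. First, $R \to Q(R)$ is injective. Second, the ideal $\mfm \cap R$ (the preimage) is prime. Third, an annihilator relation $x\mfm = 0$ in $Q(R)$ descends to $a\mathfrak{p} = 0$ in $R$: for $p \in \mathfrak{p}$ we have $p/1 \in \mfm$, so $(a/1)(p/1) = 0$ and hence $ap = 0$. I expect the main care to be needed in the Kasch step, where the denominator must be chosen correctly; the rest is a pigeonhole argument.
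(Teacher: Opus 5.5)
Your proof is correct and matches the paper's argument. For (1)$\Rightarrow$(2) you use the Kasch annihilator $a$ with $a\,(\mfm\cap R)=0$ together with the pigeonhole vanishing of $p_n$ modulo a prime of index $\le n$; for (2)$\Rightarrow$(1) you pass to a maximal ideal of $Q(R)$ and lift $n$ distinct nonzero residues. The only cosmetic differences are that you choose $\mfm \supseteq {\rm Ann}_{Q(R)}(a/1)$ and argue directly, whereas the paper chooses $\mfm \supseteq Q(R)\cdot I_{p_n}$ and argues by contradiction, and that you make explicit the injectivity of $R \to Q(R)$ (needed for $a/1 \neq 0$), which the paper uses tacitly.
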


\begin{proof}
First suppose~(1) holds, and let $\mfm$ be maximal in $Q(R)$. Set $P :=
\mfm \cap R$; then $P$ is a prime ideal of $R$ because if $xy \in P$ for
$x,y \in R$ then $\frac{x}{1} \frac{y}{1} \in \mfm$, which is prime in
$Q(R)$. Now $\mfm$ has some nonzero annihilator $\frac{a}{s}$ with $a \in
R$ and $s \in ZD(R)^c$; so $a P = 0$ in $R$.

Suppose for contradiction that $|R / P| \leq n$. Let $\pi : R
\twoheadrightarrow R / P$ be the quotient map, and compute:
\[
\pi(p_n({\bf r})) = p_n(\pi(r_1), \dots, \pi(r_n)) = 0, \qquad \forall
{\bf r} \in R^n.
\]
This yields $I_{p_n} \subseteq P$, and so $a \cdot I_{p_n} = 0$ in $R$,
contradicting~(1). This proves one implication.

Conversely, suppose for contradiction that~(2) holds but $a \cdot I_{p_n}
= 0$ in $R$ for some $a \neq 0$. Then $Q(R) \cdot I_{p_n}$ is an ideal in
$Q(R)$ that is killed by $\frac{a}{1}$, so it is a proper ideal and hence
in some maximal ideal $\mfm$. Set the prime ideal $P := \mfm \cap R$ and
$\pi : R \twoheadrightarrow R / P$ as above; then $I_{p_n} \subseteq P$.
Choose distinct nonzero elements $t_1, \dots, t_n \in R/P$ by hypothesis,
and lift them to $s_1, \dots, s_n \in R$; then $\pi(p_n({\bf s})) =
p_n({\bf t}) \neq 0$, since $R / P$ is an integral domain. This
non-equality contradicts $p_n({\bf s}) \in I_{p_n} \subseteq P$.
\end{proof}

Finally, we have:

\begin{proof}[Proof of Theorem~\ref{T2}]
Clearly $(1) \implies (2) \implies (3)$, and the assertion in~(a) follows
from (the proof in Section~\ref{Sballot} of) Theorem~\ref{T1}. That $N$
is best possible was shown in Section~\ref{Smccoy}.

Coming to the proof of~(b), suppose $R = \times_{i \in I} R_i \in
\mathfrak{R}$, with $Q(R_i)$ Kasch for all $i$.
We first claim that
\begin{equation}\label{Eclaim}
{\rm Ann}_R (I_{p_n}(R)) = \times_{i \in I} {\rm Ann}_{R_i}
(I_{p_n}(R_i)).
\end{equation}

Indeed, suppose $(r_i)_{i \in I}$ is in the LHS, and consider any $i_0
\in I$ and ${\bf s}_{i_0} \in R_{i_0}^n \hookrightarrow R^n$ (the
canonical embedding of abelian groups here). Then $(r_i)_i \cdot p_n({\bf
s}_{i_0}) = 0$ for all ${\bf s}_{i_0}$, so taking the $i_0$-component
gives $r_{i_0} \cdot p_n(R_{i_0}^n) = 0$. This holds for all $i_0$, so
$(r_i)_i$ is in the RHS. This proves one inclusion.

Conversely, let $r_i \in {\rm Ann}_{R_i}(I_{p_n}(R_i))$ for all $i$.
Choose finitely many elements ${\bf s}_{i1}, \dots, {\bf s}_{ik} \in
R_i^n$ for each $i$, and scalars $t_1 = (t_{i1})_i, \dots, t_k =
(t_{ik})_i \in R$, to write a general element $r \in I_{p_n}(R)$ as:
\[
r = \sum_{j=1}^k t_j \cdot p_n(({\bf s}_{ij})_i) \in I_{p_n}(R),
\]
via the canonical isomorphism $: \times_{i \in I} R_i^n \to R^n$.
Then $(r_i)_i \cdot r$ can be written in terms of its individual
components -- the $i$th component is
$r_i \cdot \sum_{j=1}^k t_{ij} p_n({\bf s}_{ij})$, with $p_n({\bf s}_{ij})
\in I_{p_n}(R_i)$. Hence $r_i \cdot p_n({\bf s}_{ij}) = 0$ for all $i,j$,
and so $(r_i)_i$ is in the LHS. \medskip

Having proved~\eqref{Eclaim}, we finish the proof of Theorem~\ref{T2}(b).
If $R = \times_i R_i \in \mathfrak{R}$ is not $n$-Vandermonde full, then
by~\eqref{Eclaim} there exists $i_0 \in I$ such that ${\rm
Ann}_{R_{i_0}}(I_{p_n}(R_{i_0})) \neq 0$. By Proposition~\ref{Pgemini},
there exists a maximal ideal $\mfm$ of $Q(R_{i_0})$ such that $\F :=
R_{i_0} / (\mfm \cap R_{i_0})$ has size at most $n$. But we already saw
that $P := \mfm \cap R_{i_0}$ is prime, so $\F$ is an integral domain,
whence a finite field of size $q \leq n$, say. Moreover, the Kasch
property of $Q(R_{i_0})$ yields $0 \neq b \in R_{i_0}$ and $s \in R_{i_0}
\setminus ZD(R_{i_0})$ such that $\frac{b}{s} \cdot \mfm =  0$. It
follows that $b \cdot P = 0$.

We now provide an explicit multi-affine polynomial satisfying~(2) in
Theorem~\ref{T2} but not~(1). Namely, consider any polynomial
\[
f({\bf x}) := a_\emptyset + \sum_{l \ge 0} \sum_{J \subseteq [n] : |J|=
1 + l(q-1)} a_J {\bf x}^J,
\]
with scalars $a_\emptyset \in R$ and $a_J \in {\rm Ann}_{R_{i_0}}(P)
\subseteq R_{i_0}$ for all $J \neq \emptyset$ (here and below, we
identify $R_{i_0}$ with its copy in $R_{i_0} \times (\times_{i \in I
\setminus \{i_0\}} (0)) \subset R$ as above), such that $a_J \neq 0$ for
at least one $J= J_0$ with $|J_0| \ge q$. (For instance, $f({\bf x}) = b
x_1 \cdots x_q$ with $b$ as above.) This is not affine-linear given the
monomial $a_{J_0} {\bf x}^{J_0}$ in it; and yet, given any ${\bf v} \in
R^n$, we compute:
\[
f(r \cdot {\bf v}) = a_\emptyset + \sum_{l \ge 0} \sum_{J \subseteq [n]
: |J|= 1 + l(q-1)} a_J {\bf v}^J r^{|J|}, \qquad \forall r \in R =
\times_i R_i.
\]

Since $|R_{i_0} / P| = q$, note that
\[
\pi_{i_0}(r^q - r) = \pi_{i_0}(r)^q - \pi_{i_0}(r) \in \ker_{R_{i_0}}
(\eta_{i_0}) = P, \qquad \forall r \in R
\]
where $\pi_{i_0} : R \twoheadrightarrow R_{i_0}$ is the projection, and
$\eta_{i_0} : R_{i_0} \twoheadrightarrow R_{i_0} / P = \F$ is the
quotient map. Use this repeatedly, along with that $a_J \in {\rm
Ann}_{R_{i_0}}(P)$ -- so that $a_J \pi_{i_0}(r^q) = a_J \pi_{i_0}(r)$ --
to obtain:
\[
a_J r = a_J \pi_{i_0}(r) = a_J \pi_{i_0}(r^q) = a_J r^q = \cdots = a_J
r^{2q-1} = \cdots.
\]
Thus, $f(r \cdot {\bf v}) = a_\emptyset + r \cdot (f({\bf v}) -
a_\emptyset)$, and so $f|_{R{\bf v}}$ is affine-linear.
\end{proof}

We end with:

\begin{proof}[Proof of Theorem~\ref{T3}]
Theorem~\ref{T1} shows $(1) \implies (3)$. 
Proposition~\ref{Pgemini} and Equation~\eqref{Eclaim} show the
contrapositive of $(2) \Longleftrightarrow(1)$. Finally, the proof of
Theorem~\ref{T2} shows the contrapositive of $(3) \implies (2)$.
\end{proof}

\subsection*{Acknowledgements}

The unimodularity reflection result in Lemma~\ref{lem:ballot-reflection}
was suggested by Claude Opus~4.8. Example~\ref{Exgemini} and the proofs
of Propositions~\ref{Pnoetherian} and~\ref{Pgemini} were suggested by
Google Gemini Pro~3.1.
A.K.\ thanks Prahlad Harsha, Jaikumar Radhakrishnan, and Akaki Tikaradze
for useful discussions about complexity theory and ring theory, and
acknowledges support from the Government of India via an ARG grant
(ANRF/ARG/2025/011665/MS) from ANRF, a Shanti Swarup Bhatnagar Award from
CSIR, and the DST FIST program-2021 TPN-700661.



\end{document}